\DeclareSymbolFont{bbold}{U}{bbold}{m}{n}
\DeclareSymbolFontAlphabet{\mathbbold}{bbold}
\newcommand{\R}{\mathbb{R}}
\newcommand{\1}{\mathbbold{1}}
\renewcommand{\epsilon}{\varepsilon}
\DeclareMathOperator{\dist}{dist}
\DeclareMathOperator{\av}{\tt av}
\renewcommand{\Re}{\operatorname{Re}}
\DeclareMathAccent{\Circ}{\mathalpha}{operators}{"17}
\let\eps\varepsilon
\let\phi\varphi
\let\le\leqslant
\let\leq\leqslant
\let\ge\geqslant
\let\geq\geqslant
\def\@row#1,{#1\@ifnextchar;{\@gobble}{&\@row}}
\def\@matrix{%
    \expandafter\@row\my@arg,;%
    \@ifnextchar({\\ \get@in@paren{\@matrix}}{\after@matrix}%
    }
\def\matrixtype#1#2#3{%
    \ifmmode\def\after@matrix{\end{#2}\right#3}%
    \else\def\after@matrix{\end{#2}\right#3$}$\f{\rm i}iffalse$\fi
    \left#1\begin{#2}\get@in@paren{\@matrix}%
    }
\def\@column#1,{#1\@ifnextchar;{\@gobble}{\\ \@column}}
\newcommand\vect{}
\def\svect(#1){\left(\begin{smallmatrix}\@column#1,;\end{smallmatrix}\right)}
\def\vect{\get@in@paren{\@vect}}
\def\@vect{\left(\begin{matrix}\expandafter\@column\my@arg,;\end{matrix}\right)}
\def\get@in@paren#1({\def\my@arg{}\def\my@rest{}\def\after@get{#1}\get@arg}
\let\e@a\expandafter
\def\get@arg#1){\e@a\kl@test\my@rest#1(;}
\def\kl@test#1(#2;{\e@a\def\e@a\my@arg\e@a{\my@arg#1}%
                   \ifx:#2:\let\my@exec\after@get
                   \else\let\my@exec\get@arg
                        \e@a\def\e@a\my@arg\e@a{\my@arg(}%
                        \def@rest#2;%
                   \f{\rm i}my@exec}
\def\def@rest#1(;{\def\my@rest{#1\kl@zu}}
\def\kl@zu{)}
\newcommand\MyPairedDelimiter{%
  \@ifstar{\My@Paired@Delimiter{{}}}
          {\My@Paired@Delimiter{}}%
}
\newcommand\My@Paired@Delimiter[4]{%
  \newcommand#2{%
    \@ifstar{\start@PD{#1}{\delimitershortfall=-1sp}{#3}{#4}}
            {\start@PD{#1}{}{#3}{#4}}%
  }%
}
\newcommand\start@PD[5]{%
  #1\mathopen{\mathpalette\put@delim@helper{\put@delim{#2}{#3}{.}{#5}}}%
  #5%
  \mathclose{\mathpalette\put@delim@helper{\put@delim{#2}{.}{#4}{#5}}}%
}
\newcommand\put@delim@helper[2]{%
  \hbox{$\m@th\nulldelimiterspace=0pt #2#1$}%
}
\newcommand\put@delim[5]{%
  \setbox\z@\hbox{$\m@th#5{#4}$}%
  \setbox\tw@\null
  \ht\tw@\ht\z@ \dp\tw@\dp\z@
  #1#5%
  \left#2\box\tw@\right#3%
}
\MyPairedDelimiter*{\abs}{\lvert}{\rvert}
\MyPairedDelimiter*{\norm}{\lVert}{\rVert}
\MyPairedDelimiter{\set}{\{}{\}}
\theoremstyle{plain} 
\newtheorem{theorem}{Theorem}[section]
\newtheorem{corollary}[theorem]{Corollary}
\newtheorem{lemma}[theorem]{Lemma}
\newtheorem{proposition}[theorem]{Proposition}
\theoremstyle{definition}
\newtheorem*{definition}{Definition}
\newtheorem{remark}[theorem]{Remark}
\begin{document}

\medmuskip=4mu plus 2mu minus 3mu
\thickmuskip=5mu plus 3mu minus 1mu
\belowdisplayshortskip=9pt plus 3pt minus 5pt

\title{\bf Resolvent estimates in homogenisation of periodic problems of fractional elasticity}

\author{Kirill Cherednichenko and Marcus Waurick}

\date{}

\maketitle

\begin{abstract}
 We provide operator-norm convergence estimates for solutions to a time-dependent equation of fractional elasticity in one spatial dimension, with rapidly oscillating coefficients that represent the material properties of a viscoelastic composite medium. Assuming periodicity in the coefficients, we prove operator-norm convergence estimates for an operator fibre decomposition obtained by applying to the original fractional elasticity problem the Fourier--Laplace transform in time and Gelfand transform in space. 
  We obtain estimates on each fibre that are uniform in the quasimomentum of the decomposition and in the period of oscillations of the coefficients as well as quadratic with respect to the spectral variable. On the basis of these uniform estimates we derive operator-norm-type convergence estimates for the original fractional elasticity problem, for a class of sufficiently smooth densities of applied forces. 
\end{abstract}

Keywords: Fractional elasticity, Homogenisation, Gelfand transform, Operator-norm convergence, Resolvent estimates.

MSC 2010: 35B27, 74Q10, 34K08, 34K37, 74D10.

\section{Introduction}

One notable direction in the recent mathematical literature on the derivation of the overall behaviour of composites, in the context of linearised elasticity, elastodynamics, electrodynamics, is the asymptotic analysis, as the ratio $\varepsilon$ of the microstructure size to the macroscopic size of the material sample goes to zero, of the resolvents (or ``solution operators'') of the (conservative) operators, elliptic in space or hyperbolic in space-time, that describe the response of the composite to exterior forces. Whenever convergence with respect to the operator norm is proved, one can often infer a host of properties about the underlying time dependent problem, in particular, the behaviour of the spectrum ({\it i.e.} the response to time-harmonic waves of certain frequencies and wave packets) and the convergence of the corresponding spectral projectors and operator semigroups (a version of the Trotter-Kato theorem). In the periodic setting, the advance in this kind of questions has been possible thanks to the Floquet-Bloch decomposition of the original operators into direct fibre integrals with respect to the ``quasimomentum" $\theta$ and the development of various tools combining operator-theoretic considerations and the error estimates, in the spirit of classical asymptotic analysis, that are uniform in $\theta.$ The revised notion of homogenisation as the asymptotic procedure of replacing the original resolvent family by operators where different spatial scales are separated, in the sense of being described by a system of coupled field equations, can be viewed as a rigorous generalised procedure of classifying composites according to their overall response. For example, periodic composites whose component materials have highly contrasting properties ({\it e.g.} in the form of ``soft" inclusions embedded in a ``stiff'' matrix) can  be seen, using this kind of approach, to exhibit physical properties recognisable as those of so-called ``metamaterials'', {\it e.g.} media with negative refractive index, artificial magnetism {\it etc}. The asymptotically equivalent operator family found in this procedure can be viewed as an alternative model for the same composite, equivalent to it  in the sense of respecting all of its qualitative and quantitative properties. 

In the present work we address, for the first time, the operator-norm homogenisation-type estimates for a family of non-conservative time-dependent problems, where the energy dissipation takes place internally by friction-like forces into heat. We consider the linearised problem of one-dimensional elasticity, modified by an operator of fractional time differentiation, see \cite[Section 4.2]{Waurick2014MMAS_Frac}. For the unknown scalar valued-functions $u$ and $\sigma,$ which represent the elastic displacement and stress at the point $x$ of the medium at time $t,$ we consider the problem
\[
   \begin{cases}
      \partial_t^2 u - \partial_x\sigma = f,&\\[0.2em]
      \sigma = (C + \partial_t^\alpha D)\partial_x u.
   \end{cases} 
\]
Here $C,$ $D$ are non-negative functions depending on the spatial variable $x\in{\mathbb R}$ only, which can be viewed as viscoelastic constitutive parameters of the medium, $\alpha\in (0,1]$, and $f$ is a given source term describing the density of forces applied to the medium. The operator $\partial_t^\alpha$ is the fractional time derivative in a sense to be described in the next section. If the support of $f$ with respect to the temporal variable is bounded below, then $\partial_t^\alpha$ coincides with the Riemann--Liouville derivative (see \emph{e.g.} \cite{MillerRoss93,Podlubny99}). We refer to \cite{Bertram} for a justification of the model to describe viscoelastic behaviour from an engineering perspective. 

The well-posedness of the above dynamic problem has been addressed in \cite{Waurick2014MMAS_Frac}, and in \cite{Waurick2014SIAM_HomFrac} a corresponding homogenisation problem has been considered, where convergence of the corresponding solution operators is established in a certain weak topology for operators in $L^2$-spaces with appropriate weights that ensure solvability and well-posedness of the original heterogeneous problems.  More precisely, assuming periodicity and boundedness in the coefficients $C$ and $D$, it has been shown in \cite[Theorem 3.6]{Waurick2014SIAM_HomFrac} that for  $\alpha\geq 1/2$ 
the solution operators for the problems
\begin{equation}
   \begin{cases}
      \partial_t^2 u_n - \partial_x \sigma_n(u_n) = f,&\\[0.2em]
      \sigma_n(u_n) = \bigl(C(n\cdot) + \partial_t^\alpha D(n\cdot)\bigr)\partial_{x,0} u_n,\ \ \ \ n\in{\mathbb N},
   \end{cases}
 \label{n_dependent}
\end{equation}
on the time-space domain $\mathbb{R}\times (0,1)$
converge as $n\to\infty$ in the weak operator topology of a
Hilbert space of functions defined in space-time to the solution operator for 
\[
 \begin{cases}{\displaystyle
    \partial_t^2 u -\partial_x\sigma^{\rm hom}(u)= f},\\[0.4em] {\displaystyle
   \sigma^{\rm hom}(u):=\left(\int_0^1D^{-1}\right)^{-1}\partial_t^\alpha\partial_{x,0}u} 
   {\displaystyle 
   +\partial_t^\alpha\sum\limits_{k=1}^\infty\left(-\sum\limits_{\ell=1}^\infty (-\partial_t^{-\alpha})^\ell \int_0^1C^\ell D^{-\ell-1}\left(\int_0^1D^{-1}\right)^{-1} \right)^{k}\partial_{x,0}u.
   }
   \end{cases}
\]
Here $\partial_{x,0}$ denotes the distributional derivative on the interval $(0,1)$ with zero-boundary conditions at the endpoints $x=0$ and $x=1.$
In \cite[Theorem 3.6]{Waurick2014SIAM_HomFrac} the boundedness of the underlying spatial domain 
has been crucial for the analysis of the problem (\ref{n_dependent}), in order to ensure a compactness condition assumed in a homogenisation theorem of more general nature, see  \cite[Theorem 4.1]{Waurick2014SIAM_HomFrac}, \cite[Theorem 3.5]{Waurick2013AA_Hom}, or \cite[Theorem 5.2.3]{Waurick2016Hab}. 

In the present article we complement the results of \cite{Waurick2014SIAM_HomFrac} by passing from the case of a bounded spatial domain $(0,1)$ to the whole space $\mathbb{R}$. Moreover, we shall provide resolvent convergence estimates for problems of the form (\ref{n_dependent}), rather than qualitative convergence results. The convergence analysis for resolvents for homogenisation problems goes back to the works \cite{Sevostianova}, \cite{Zhikov1989}, where the behaviour of the Green functions for parabolic equations with rapid oscillations was studied using their decomposition with respect to systems of eigenfunction for certain basic problems on the microscale, known as ``cell problems''. More recently, an operator-theoretic version of this approach was developed in \cite{Birman_Suslina2004}, based on a combination of the methods of spectral and perturbation theory applied to a class of spatial self-adjoint operators of the form $X_\theta^*X_\theta,$ where $X_\theta$ is a linear operator pencil. The approach of \cite{Birman_Suslina2004} was combined with boundary-layer analysis in \cite{Suslina_Neumann}, \cite{Suslina_Dirichlet}, where resolvent estimates for elliptic problems in bounded domains have been obtained. It was further refined in \cite{Suslina_Meshkova}, where the dependence of the error estimates on the spectral parameter was investigated and operator-norm convergence estimates for semigroups (equivalently, parabolic time-dependent problems) have been proved by expressing the semigroup in terms of the contour integrals of the resolvents. At the same time, several other approaches have been developed to obtain order-sharp operator-norm estimates in the elliptic self-adjoint and parabolic contexts: the method of first-order approximation in \cite{Zhikov_Pastukhova1}, \cite{Zhikov_Pastukhova2}, the method of periodic unfolding 
in \cite{Griso} and an adaptation of the classical boundary-layer potential analysis for systems of PDEs in 
\cite{Kenig_et_al1}, \cite{Kenig_et_al2}. The main advantage of the operator-norm asymptotic estimates obtained in these works in comparison to some of the earlier approaches, {\it e.g.} two-scale convergence, is that they automatically imply that the energy convergence criterion is satisfied: for problem data from a wide class the total energy of the solution to the limit problem is the limit of the total energies described by the original parameter-dependent problems. The task of ensuring the convergence of the total energy becomes even more challenging in problems where the underlying spatial operator is elliptic but not in a uniform way with respect to the microstructure size, as it happens for example in problems with coefficients degenerating on a part of the unit cell, see \cite{Zhikov2000}, \cite{Smyshlyaev_Mechanics}, as the positive answer to it is then more sensitive to the operator topology chosen for the convergence statement. This can already be seen through some simple examples that require the use of multiscale, rather than classical ``one-scale", techniques.

We want to emphasise that the approach developed here contains the parabolic heat equation as well as the hyperbolic wave equation as respective special cases. In fact, for $\alpha=1$ and $C=0$, one recovers the heat equation, and for $D=0$ the wave equation is treated.  Note that for $C=0$ and $\alpha\in(0,1)$ we consider the case of so-called superdiffusion equations as well, see \cite{Waurick2013AA_Hom}. Moreover, see \cite{Waurick2016AML}, we also treat mixed type equations, that is, equations changing its type on the underlying spatial domain.

We next outline the structure of the paper. In Section \ref{well_pos_section}, we recall the formulation for evolutionary problems using weighted spaces with respect to time and the corresponding solution theory. This includes the study of the Fourier--Laplace transform of the evolutionary problem of viscoelasticity. In Section \ref{Gelfand_section} we outline the class of periodic problems we aim to address in the viscoelasticity context, describe a more general class of problems that can be treated using the same approach and formulate the main result of the present article (Theorem \ref{t:mt}) for this class. 

All remaining sections, apart from the last one, are devoted to a proof of our main theorem. In particular, in Section \ref{sec:ortho_decom} we carry out a spectral decomposition of the one-dimensional derivative on a bounded interval with periodic boundary conditions. This decomposition is needed for a reformulation of the unbounded spatial operator of the viscoelasticity problem following the application of the Gelfand transform of Section \ref{Gelfand_section}. Further, in Section \ref{averaging} we discuss the relationship of the spectral decomposition derived in Section \ref{sec:ortho_decom} to the averaging operator defined in Section \ref{Gelfand_section}, which serves as a means to compute the integral average as an action on suitable operator spaces. Finally, Section \ref{proof_thm} contains the proof of our main result. The article is concluded with Section \ref{viscoelasticity_treatment}, where we come back to the viscoelasticity problem that motivated our study. We apply the general norm-resolvent estimates obtained in Sections \ref{Gelfand_section}--\ref{proof_thm} to derive uniform operator estimates for the homogenisation problem of the viscoelasticity system stated in \eqref{n_dependent}, with the underlying spatial domain being the whole line $\mathbb{R}$ rather than a bounded interval.

\section{Well-posedness of the dynamic problem}
\label{well_pos_section}

In this section we will briefly recall the well-posedness result for the dynamic problem of fractional elasticity, which was obtained in \cite{Waurick2014MMAS_Frac}. We first outline the related functional analytic framework for the operator of fractional time derivative. A more detailed exposition of this setting can be found in \cite[Section 2]{Waurick2013JDDE_Delay} or \cite[Section 2.1]{Waurick2014MMAS_Frac}. 

For $\nu>0$ and a Hilbert space $H$, we define 
\[
   L_\nu^2(\mathbb{R};H)\coloneqq\left\{ f\in L_{\text{loc}}^2(\mathbb{R};H):\ \|f\|_{L_\nu^2({\mathbb R}; H)}^2\coloneqq \int_{\R}\bigl\|f(t)\bigr\|_H^2\exp(-2t\nu)dt<\infty\right\}.
\]
We denote by $H_\nu^1(\mathbb{R};H)$ the space of weakly differentiable $L_\nu^2$-functions with derivative in $L_\nu^2$. It is shown that the operator
\[
   \partial_t \colon  L_\nu^2(\mathbb{R};H)\supseteq H_\nu^1(\mathbb{R};H)\ni f \mapsto f' \in L_\nu^2(\mathbb{R};H),
\]
is continuously invertible, with the inverse given by the Bochner integral
\[
  \partial_t^{-1}f (t) = \int_{-\infty}^t f,\quad \ \ t\in \R,\ f\in L_\nu^2(\mathbb{R};H),
\]
and that $\|\partial_{t}^{-1}\|\leq 1/\nu$. Denote by $m$ the operator in $L^2(\mathbb{R};H)$ of multiplication by the independent variable:
\[
   m \colon L^2(\mathbb{R};H)\supseteq\bigl\{ f\in L^2(\mathbb{R};H): \xi\mapsto \xi f(\xi) \in L^2(\mathbb{R};H)\bigr\}\ni f\mapsto  \bigl(\xi\mapsto \xi f(\xi)\bigr)\in L^2(\mathbb{R};H).
\]
Further, we introduce the Fourier--Laplace transform $\mathcal{L}_\nu \colon L_\nu^2(\mathbb{R};H)\to L^2(\mathbb{R};H)$ as the unitary extension of the mapping 
\begin{equation}
  \mathcal{L}_\nu \phi(\xi) \coloneqq \frac{1}{\sqrt{2\pi}}\int_\mathbb{R} e^{-{\rm i}\xi t-\nu t}\phi(t)dt, \quad \xi\in \mathbb{R},\ \phi\in C_{\rm c}^\infty(\R;H),
 \label{Laplace_transform}
\end{equation}
where $C_{\rm c}^\infty(\R;H)$ is the set of $H$-valued smooth functions with compact support. It is a consequence of the spectral theorem for the distributional derivative in $L^2(\mathbb{R}; H)$, that 
\[
\partial_t = \mathcal{L}_\nu^*({\rm i}m +\nu)\mathcal{L}_\nu,
\]
which allows us to define the ``fractional" time derivative of order $\alpha\in(0,1]:$  
\[
  \partial_t^\alpha \coloneqq \mathcal{L}_\nu^*({\rm i}m +\nu)^\alpha\mathcal{L}_\nu,
\]
or, more generally, the function $\mathfrak{M}$ of the operator $\partial_t:$
\[
\mathfrak{M}(\partial_t) \coloneqq \mathcal{L}_\nu^*\mathfrak{M}({\rm i}m +\nu)\mathcal{L}_\nu,
\]
for every analytic ${\mathbb L}(H)$-valued function $\mathfrak{M}$ defined on ${\rm i}\mathbb{R}+\mathbb{R}_{\geq \nu}$.\footnote{Note that for the definition of $\mathfrak{M}(\partial_t)$ alone it is not necessary to have $\mathfrak{M}$ defined and analytic on a full right-half plane. 
However, in order to maintain causality of a solution operator to certain abstract PDEs involving $\mathfrak{M}(\partial_t)$ analyticity on a right-half plane is an essential requirement, see also \cite[Remark 2.11]{PicPhy}.} Here we denote by ${\mathbb L}(H)$ the set of bounded  linear operators on $H,$
as well as 
\begin{equation}
   (\mathfrak{M}({\rm i}m +\nu)\phi)(\xi)\coloneqq \mathfrak{M}({\rm i}\xi +\nu)\phi(\xi),\quad \phi\in C_{\rm c}(\mathbb{R};H),\quad \xi\in \mathbb{R}.
 \label{M_mathfrak}
\end{equation}
In the special case $\mathfrak{M}\colon z \mapsto z^\alpha,$ see \cite[p. 3143]{Waurick2014MMAS_Frac}, 
the formula (\ref{M_mathfrak}) yields a proper implementation of the Riemann--Liouville derivative, see \emph{e.g.} \cite{MillerRoss93,Podlubny99}.
Now we are in a position to recall the solution theory for the dynamic problem, which will be eventually applied to the model of viscoelasticity introduced above. It is worth mentioning possible generalisations of this approach to the non-linear and/or non-autonomous case \cite{Trostorff2012,Trostorff2014a,RainerPicard2013,Waurick2014MMAS_Non,Waurick2016Hab}. In the following, we will use the same notation for an operator in $H$ and for the corresponding abstract multiplication operator in $L_\nu^2(\mathbb{R};H)$.

\begin{theorem}[{{\cite[Solution Theory]{PicPhy}}}]\label{t:st} Suppose that $\nu>\mu>0$, and let ${\mathfrak M}\colon {\rm i}\mathbb{R}+\mathbb{R}_{>\mu}\to {\mathbb L}(H)$ be bounded and analytic. Assume that there is $\gamma>0$ such that for all $z\in  {\rm i}\mathbb{R}+\mathbb{R}_{\geq \nu}$ we have
\[
\Re\bigl(z{\mathfrak M}(z)\bigr)-\gamma I\ge 0,
\]
where $\Re\bigl(z{\mathfrak M}(z)\bigr):=\bigl(z{\mathfrak M}(z)+z^*{\mathfrak M}(z)^*\bigr)/2$ and $I$ is the identity operator on $H.$ Consider also a skew-selfadjoint operator $A$ in $H.$

Then the operator 
\[
   B\coloneqq \partial_t{\mathfrak M}(\partial_t)+A \colon  L_\nu^2(\mathbb{R};H)\supseteq D(\partial_t)\cap D(A)\to L_\nu^2(\mathbb{R};H)
\]
is densely defined and closable. Furthermore, for its closure $\overline{B}$ one has $0\in \rho(\overline{B})$, $\bigl\|\overline{B}^{-1}\bigr\|\leq1/\gamma$, and $S\coloneqq \overline{B}^{-1}$ is causal ({\it cf.} \cite{Waurick2016Hab}), that is, for all $t\in \mathbb{R}$, the property
\[
   \1_{(-\infty,t]}S\1_{(-\infty,t]}=\1_{(-\infty,t]}S
\]
holds, where $\1_{(-\infty,t]}$ is the operator of multiplication by the characteristic function of the time interval $(-\infty,t].$
\end{theorem}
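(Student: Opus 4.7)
The natural strategy is to conjugate the problem by the Fourier--Laplace transform $\mathcal{L}_\nu$ and thereby reduce the unbounded operator $B$ to a family of bounded operators parametrised by the dual frequency variable. Since $\partial_t=\mathcal{L}_\nu^*(\i m+\nu)\mathcal{L}_\nu$ and, by definition, $\mathfrak{M}(\partial_t)=\mathcal{L}_\nu^*\mathfrak{M}(\i m+\nu)\mathcal{L}_\nu$, the operator $\partial_t\mathfrak{M}(\partial_t)$ corresponds, under $\mathcal{L}_\nu$, to multiplication by the ${\mathbb L}(H)$-valued symbol $z\mapsto z\mathfrak{M}(z)$ along the line $z=\i\xi+\nu$. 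Since $A$ commutes with $\mathcal{L}_\nu$ (because we identify it with its pointwise-in-time lift), the operator $B$ is unitarily equivalent to the direct integral of the fibres
\[
   B(\xi)\coloneqq (\i\xi+\nu)\mathfrak{M}(\i\xi+\nu)+A\quad\text{on } H,\qquad \xi\in\R.
\]

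For each $\xi$ I would first establish that $B(\xi)$ is boundedly invertible with $\|B(\xi)^{-1}\|\le 1/\gamma$. Since $A$ is skew-selfadjoint, $\Re\dupa{Au}{u}=0$ for $u\in D(A)$, and the hypothesis $\Re(z\mathfrak{M}(z))\geq \gamma I$ at $z=\i\xi+\nu$ yields $\Re\dupa{B(\xi)u}{u}\ge\gamma\|u\|^2$ for all $u\in D(A)$. This numerical-range estimate gives injectivity and a closed range with the quantitative bound; applying the same bound to the adjoint $B(\xi)^*=-(\i\xi+\nu)^*\mathfrak{M}(\i\xi+\nu)^*-A$ (whose real part is likewise $\ge\gamma I$) yields density of the range, hence $0\in\rho(B(\xi))$ together with the claimed norm estimate. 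Boundedness and analyticity of $\mathfrak{M}$ make $\xi\mapsto B(\xi)^{-1}$ a bounded strongly measurable ${\mathbb L}(H)$-valued function on $\R$, so multiplication by $B(\cdot)^{-1}$ is a bounded operator on $L^2(\R;H)$ of norm at most $1/\gamma$. Conjugating back by $\mathcal{L}_\nu$ produces a bounded operator $S$ on $L^2_\nu(\R;H)$ with $\|S\|\le 1/\gamma$ which is easily seen to be a two-sided inverse of $B$ on $D(\partial_t)\cap D(A)$; density of this intersection (e.g., via $C_{\rm c}^\infty(\R;D(A))$) and the identification $S=\overline{B}^{-1}$ then follow routinely.

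The remaining and, I expect, most delicate point is \emph{causality}, which cannot be read off from the fibrewise estimate alone; it is what forces the full right half-plane hypothesis on $\mathfrak{M}$. The plan is to apply a Paley--Wiener-type argument: for $f\in L^2_\nu(\R;H)$ supported in $(-\infty,t]$, the function $\tau\mapsto \e^{-\nu\tau}f(\tau)\1_{(-\infty,t]}$ has a Laplace transform that extends holomorphically into the right half-plane $\Re z>\nu$, and by the hypothesis the symbol $z\mapsto(z\mathfrak{M}(z)+A)^{-1}$ is analytic and uniformly bounded there. Shifting the integration contour for the inverse Fourier--Laplace transform from $\Re z=\nu$ to $\Re z=\nu'$ with $\nu'>\nu$ and letting $\nu'\to\infty$ shows that $Sf$ vanishes on $(t,\infty)$, which is precisely the causality identity $\1_{(-\infty,t]}S\1_{(-\infty,t]}=\1_{(-\infty,t]}S$. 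The bound $\bigl\|(z\mathfrak{M}(z)+A)^{-1}\bigr\|\le 1/\gamma$ extends to the half-plane $\Re z\ge\nu$ by repeating the numerical-range argument with $z$ in place of $\i\xi+\nu$, so the contour shift is legitimate; the details of dominated convergence in the contour deformation are the only technical hurdle.
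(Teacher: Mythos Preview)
Your approach coincides with the strategy the paper itself sketches in Remark~\ref{r:rp} (the paper does not give a self-contained proof of this theorem but cites it from \cite{PicPhy}): conjugate by $\mathcal{L}_\nu$, obtain the fibres $B(\xi)=(\i\xi+\nu)\mathfrak{M}(\i\xi+\nu)+A$, and invert each fibre via the numerical-range estimate $\Re\langle B(\xi)u,u\rangle\ge\gamma\|u\|^2$ together with the same estimate for the adjoint. That part of your argument is correct and is exactly what the paper has in mind.

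There is, however, a genuine slip in your causality paragraph: the support/half-plane correspondence is reversed. If $f\in L^2_\nu(\R;H)$ is supported in $(-\infty,t]$, then $\mathcal{L}_\nu f$ extends holomorphically to the \emph{left} half-plane $\Re z<\nu$, not the right one (for $\tau\to-\infty$ the factor $e^{-z\tau}$ is controlled only when $\Re z$ is small). More importantly, the conclusion you draw---that such an $f$ produces $Sf$ vanishing on $(t,\infty)$---is \emph{not} the causality identity stated in the theorem and is in fact false for typical causal solution operators: already $\partial_t^{-1}$ maps a compactly supported $f$ to a function equal to a nonzero constant to the right of $\spt f$. The correct statement and argument go the other way: if $f$ is supported in $[t,\infty)$, then $\mathcal{L}_\nu f$ extends holomorphically and boundedly to $\Re z>\nu$; multiplication by the uniformly bounded analytic symbol $z\mapsto(z\mathfrak{M}(z)+A)^{-1}$ preserves this, and Paley--Wiener then yields $\spt(Sf)\subseteq[t,\infty)$, i.e.\ $\1_{(-\infty,t]}S(I-\1_{(-\infty,t]})=0$, which is exactly $\1_{(-\infty,t]}S=\1_{(-\infty,t]}S\1_{(-\infty,t]}$. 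With this correction your outline is complete and matches the paper's intended route.
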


As a corollary of Theorem \ref{t:st}, we obtain the following statement about well-posedness of the dynamic problem of fractional elasticity.

\begin{corollary}[{{\cite[Theorem 4.1]{Waurick2014MMAS_Frac}}}]\label{c:st} Let $\nu_0>0$, $C,D\in L^\infty(\mathbb{R};\mathbb{R}_{\geq 0})$, $\alpha\in (0,1]$. Assume that for some $c>0$ one has
\[
   C(x)+D(x)\nu_0^\alpha\geq c\quad{\rm a.e.}\ x\in \mathbb{R},
\]
and let $A$ be a skew-selfadjoint operator in $\bigl[L^2(\Omega)\bigr]^2$ for some open $\Omega\subseteq \mathbb{R}$.
Then there exists $\nu\geq \nu_0$ such that the operator
\begin{equation}
   \partial_t\begin{pmatrix}
      1 & 0 \\[0.3em] 0 & \bigl(C+D\partial_t^\alpha\bigr)^{-1}
   \end{pmatrix} + A
   \label{corollary_op}
\end{equation}
is densely defined and closable, with continuously invertible closure in $L_\nu^2\bigl(\mathbb{R};\bigl[L^2(\Omega)\bigr]^2\bigr)$.
\end{corollary}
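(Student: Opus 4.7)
The plan is to reduce Corollary \ref{c:st} to Theorem \ref{t:st} by taking the material law
\[
\mathfrak{M}(z) := \begin{pmatrix} 1 & 0 \\ 0 & (C+Dz^\alpha)^{-1}\end{pmatrix}
\]
acting on $H := [L^2(\Omega)]^2$ via multiplication in the spatial variable, together with the given skew-selfadjoint $A$. With this choice, the operator in \eqref{corollary_op} is exactly $\partial_t\mathfrak{M}(\partial_t)+A$, so it suffices to verify that $\mathfrak{M}$ is bounded and analytic on $\mathrm{i}\mathbb{R}+\mathbb{R}_{>\mu}$ for some $\mu>0$, and that $\Re(z\mathfrak{M}(z))\geq\gamma I$ holds on a right half-plane $\mathrm{i}\mathbb{R}+\mathbb{R}_{\geq\nu}$ with $\nu\geq\nu_0$.

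For boundedness and analyticity, I would employ the principal branch $z\mapsto z^\alpha$ (holomorphic on the open right half-plane), for which $\arg(z^\alpha)=\alpha\arg z$. Writing $z=\eta+\mathrm{i}\xi$ with $\eta>0$, this gives $\Re(z^\alpha)\geq\cos(\alpha\pi/2)|z|^\alpha\geq\cos(\alpha\pi/2)\eta^\alpha$ when $\alpha\in(0,1)$, and $\Re(z)=\eta$ when $\alpha=1$. In either case, choosing $\mu$ large enough---depending only on $\alpha$ and $\nu_0$---yields the pointwise bound $\Re(C(x)+D(x)z^\alpha)\geq C(x)+D(x)\nu_0^\alpha\geq c$ for $\Re z>\mu$, so $(C+Dz^\alpha)^{-1}$ exists as a multiplication operator of norm at most $1/c$; analyticity in $z$ then follows from that of $z\mapsto z^\alpha$.

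The main obstacle is the uniform positivity of $\Re(z\mathfrak{M}(z))$. The upper-left block contributes $\Re z\geq\nu$, which is harmless. For the lower-right block, a direct computation gives
\[
\Re\!\left(\frac{z}{C+Dz^\alpha}\right)=\frac{C\,\Re z\;+\;D|z|^{1+\alpha}\cos\!\bigl((1-\alpha)\arg z\bigr)}{\bigl|C+Dz^\alpha\bigr|^2}.
\]
The crucial observation is that for $z\in\mathrm{i}\mathbb{R}+\mathbb{R}_{\geq\nu}$ one has $|\arg z|<\pi/2$ and $1-\alpha\in[0,1)$, so $\cos((1-\alpha)\arg z)\geq\cos((1-\alpha)\pi/2)>0$ (equal to $1$ when $\alpha=1$). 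Combining this with $|C+Dz^\alpha|\leq C+D|z|^\alpha$ and the pointwise lower bound from the previous step, a short case analysis on the size of $|z|$ (or on the angle $\arg z$) produces a positive constant $\gamma$, depending only on $\alpha$, $c$, $\nu_0$, $\|C\|_\infty$ and $\|D\|_\infty$, bounding the expression above from below uniformly in $z$ and a.e.~$x\in\mathbb{R}$.

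With both diagonal blocks of the positivity condition verified, Theorem \ref{t:st} immediately delivers that $B:=\partial_t\mathfrak{M}(\partial_t)+A$ is densely defined and closable with continuously invertible closure in $L_\nu^2(\mathbb{R};[L^2(\Omega)]^2)$, which is the claim.
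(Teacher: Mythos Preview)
Your proposal is correct and follows exactly the route the paper indicates: the paper states that the essential step is to verify that
\[
\mathfrak{M}(z)=\begin{pmatrix}1&0\\[0.2em]0&(C+Dz^\alpha)^{-1}\end{pmatrix}
\]
satisfies the hypotheses of Theorem~\ref{t:st} for all $\nu>\mu$ with some $\mu\geq\nu_0$, and defers the details of this verification to \cite{Waurick2014MMAS_Frac}. You carry out precisely this verification (boundedness and analyticity via the lower bound on $\Re(C+Dz^\alpha)$, and the accretivity estimate for $\Re\bigl(z(C+Dz^\alpha)^{-1}\bigr)$), so there is no difference in approach---you simply supply more of the computation than the paper does.
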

 The essential part in the proof of the above corollary, which is presented in \cite{Waurick2014MMAS_Frac}, is to verify that the operator-valued function 
 \[
    {\mathfrak M}(z)\coloneqq \begin{pmatrix}
      1 & 0 \\[0.3em] 0 & (C+Dz^\alpha)^{-1}
   \end{pmatrix},\ \ \ \ \ \ z\in{\rm i}{\mathbb R}+{\mathbb R}_{>\mu},
 \]
 satisfies the conditions
 of Theorem \ref{t:st} for all $\nu>\mu$ for some $\mu\geq\nu_0$.

\begin{remark} The original problem of fractional elasticity is obtained by setting $A=- \begin{pmatrix} 0 & \partial_x \\ \partial_x & 0\end{pmatrix}$ with $\Omega=\mathbb{R}$. Other boundary conditions are also possible. For instance, one can take $\Omega=(0,1)$ and $A=- \begin{pmatrix} 0 & \partial_x \\  \partial_{x,0} & 0\end{pmatrix},$ where $\partial_{x,0}$ denotes the distributional derivative in $L^2(0,1)$ defined on $H_0^1(0,1)$, so that $\partial_x=-\partial_{x,0}^*$. Further, the setting $\Omega= (0,1)$ and $A=- \begin{pmatrix} 0 & \partial_{x,\#} \\ \partial_{x,\#} & 0\end{pmatrix},$ with $\partial_{x,\#}\subseteq \partial_x$ and $H_\#^1(0,1)\coloneqq D(\partial_{x,\#})=\bigl\{ f\in H^1(0,1): f(0)=f(1)\bigr\},$ leads to another skew-selfadjoint realisation of the operator $A$. 
\end{remark}

\begin{remark}\label{r:rp} Here we comment on the overall strategy of the proof of Theorem \ref{t:st}.
Employing the Fourier--Laplace transform defined by the formula (\ref{Laplace_transform}), we obtain an operator in $L^2\bigl({\mathbb R}; \bigl[L^2(\Omega)\bigr]^2 \bigr)$ that is unitarily equivalent to (\ref{corollary_op}):
\begin{equation}
     \partial_t\begin{pmatrix}
      1 & 0 \\[0.3em] 0 & \bigl(C+D\partial_t^\alpha\bigr)^{-1}
   \end{pmatrix} + A
   = \mathcal{L}_\nu^* \Biggl( ({\rm i}m+\nu)\begin{pmatrix}
      1 & 0 \\[0.3em] 0 & \bigl(C+D({\rm i}m+\nu)^\alpha\bigr)^{-1}
   \end{pmatrix} + A
   \Biggr)\mathcal{L}_\nu.
   \end{equation}
The key idea for justifying the continuous invertibility of the (closure of) the latter operator is therefore to guarantee the continuous invertibility of the expression 
\begin{equation}
({\rm i}\xi+\nu)\begin{pmatrix}
      1 & 0 \\[0.3em] 0 & \bigl(C+D({\rm i}\xi+\nu)^\alpha\bigr)^{-1}
   \end{pmatrix} + A
   \label{first_order_op}
\end{equation}
as an operator in $\bigl[L^2(\Omega)\bigr]^2$ with a bound on the operator norm of its inverse that is uniform in $\nu\ge \mu$ and $\xi\in{\mathbb R}.$ To invert (\ref{first_order_op}) amounts to solving a certain resolvent problem. In the following, this problem will be the starting point for our operator-norm analysis, in the case when the coefficients $C,$ $D$ are periodic.
We summarise the above in the following proposition.

\begin{proposition}\label{t:sts} Suppose that $\nu_0>0$, $C,D\in L^\infty(\mathbb{R};\mathbb{R}_{\geq 0})$, $\alpha\in (0,1]$. Assume that for some $c>0$ one has
\[
   C(x)+D(x)\nu_0^\alpha\geq c\quad \ \ a.e.\ \ x\in \mathbb{R},
\]
and suppose that $A$ is a skew-selfadjoint operator in $\bigl[L^2(\Omega)\bigr]^2$, where $\Omega\subseteq \mathbb{R}$ is open.
Then there exists $\mu\geq\nu_0$ such that for all $\nu>\mu$ and $\xi\in \mathbb{R}$ the operator
\begin{equation}
   ({\rm i}\xi+\nu)\begin{pmatrix}
      1 & 0 \\[0.3em] 0 & \bigl(C+D({\rm i}\xi+\nu)^\alpha\bigr)^{-1}
   \end{pmatrix} + A
   \label{operator_class}
\end{equation}
is densely defined and continuously invertible in $\bigl[L^2(\Omega)\bigr]^2$. Moreover, there exists $\kappa>0$ such that for all $\xi\in{\mathbb R}$ the estimate
\[
    \sup_{
    \nu>\mu}\Bigg\|
    \Bigg(
   ({\rm i}\xi+\nu)\begin{pmatrix}
      1 & 0 \\[0.3em] 0 & \bigl(C+D({\rm i}\xi+\nu)^\alpha\bigr)^{-1}
   \end{pmatrix} + A
   \Bigg)
   ^{-1}\Bigg\|_{{\mathbb L}([L^2(\Omega)]^2)}\leq \kappa
\]
holds.
\end{proposition}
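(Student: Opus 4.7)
The plan is to prove Proposition~\ref{t:sts} directly at the fiber level, by reducing the claim to a pointwise-in-$x$ coercivity estimate for the matrix symbol of the operator in \eqref{operator_class}, and then applying a standard accretivity-plus-skew-adjoint argument. Write $z \coloneqq {\rm i}\xi + \nu$ and set
\[
B_\xi \coloneqq z\begin{pmatrix} 1 & 0 \\ 0 & (C+Dz^\alpha)^{-1}\end{pmatrix},
\]
so that \eqref{operator_class} reads $B_\xi + A$, with $B_\xi$ a bounded multiplication operator on $\bigl[L^2(\Omega)\bigr]^2$; boundedness follows from the lower bound $|C+Dz^\alpha| \geq c/\sqrt{2}$, itself a consequence of the non-negativity of $\Re(z^\alpha)$ on $\Re z \geq 0$ and of the hypothesis $C + D\nu_0^\alpha \geq c$.

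The key step would be to establish the uniform accretivity
\[
\Re\bigl[B_\xi(x)\bigr] \geq \gamma\, I_{\mathbb{C}^2} \qquad \text{for a.e.\ } x \in \Omega,
\]
with $\gamma > 0$ independent of $\xi \in \mathbb{R}$ and of $\nu > \mu$, for some $\mu \geq \nu_0$ chosen sufficiently large. The $(1,1)$-entry contributes $\Re z = \nu > \mu$, so only the $(2,2)$-entry requires attention. Writing $z = |z|\,\e^{{\rm i}\phi}$ with $|\phi| < \pi/2$, a direct computation gives
\[
\Re\biggl[\frac{z}{C+Dz^\alpha}\biggr] = \frac{C\nu + D|z|^{1+\alpha}\cos\bigl((1-\alpha)\phi\bigr)}{|C+Dz^\alpha|^2},
\]
with non-negative numerator since $\cos((1-\alpha)\phi) \geq \cos((1-\alpha)\pi/2) > 0$ for $\alpha \in (0,1]$. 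The desired positive lower bound then follows by combining the upper estimate $|C+Dz^\alpha|^2 \leq (C+D|z|^\alpha)^2$, the hypothesis $C+D\nu_0^\alpha \geq c$, the essential boundedness of $C$ and $D$, and a short case analysis in the parameter $D|z|^\alpha$ (small vs.\ large compared with $C$); this is essentially the computation underlying the proof of \cite[Theorem 4.1]{Waurick2014MMAS_Frac}.

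Given the pointwise accretivity, the invertibility of $B_\xi + A$ with the uniform bound is a standard Lax--Milgram-type argument. For $u \in D(A)$, skew-selfadjointness of $A$ yields
\[
\Re\sp{(B_\xi + A)u}{u} = \Re\sp{B_\xi u}{u} \geq \gamma\|u\|^2,
\]
so $\|(B_\xi+A)u\| \geq \gamma\|u\|$ by the Cauchy--Schwarz inequality; in particular, $B_\xi + A$ is injective with closed range. The same estimate applied to $(B_\xi+A)^* = B_\xi^* - A$, using $\Re B_\xi^* = \Re B_\xi \geq \gamma I$, shows that the range of $B_\xi + A$ is also dense. Consequently $B_\xi + A$ is continuously invertible on $\bigl[L^2(\Omega)\bigr]^2$ with $\|(B_\xi+A)^{-1}\| \leq \kappa \coloneqq 1/\gamma$, uniformly in $\xi \in \mathbb{R}$ and $\nu > \mu$, as asserted.

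I expect the main obstacle to be the uniform accretivity estimate itself, since one must combine the asymptotic regimes of $|z|$ cleanly: for $D|z|^\alpha \ll C$ the numerator is driven by $\nu C$; for $D|z|^\alpha \gg C$ with $\alpha \in (0,1)$ the quotient grows like $|z|^{1-\alpha}/D$; and for $\alpha = 1$ it tends to $1/D$. Closing the bound uniformly requires verifying that the transition between these regimes does not degenerate, and it is precisely here that the hypothesis $C+D\nu_0^\alpha \geq c$ together with a suitable choice of threshold $\mu$ plays its essential role.
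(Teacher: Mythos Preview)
Your proposal is correct and follows essentially the same route as the paper: the proposition is presented there as a direct consequence of the proof strategy of Theorem~\ref{t:st} (the accretivity condition $\Re(z\mathfrak{M}(z))\geq\gamma$ combined with the skew-selfadjointness of $A$), with the pointwise accretivity of the $(2,2)$-entry deferred to \cite[Theorem~4.1]{Waurick2014MMAS_Frac}, exactly as you do. You simply make explicit the Lax--Milgram argument and the computation of $\Re\bigl[z/(C+Dz^\alpha)\bigr]$ that the paper leaves implicit in those references.
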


\end{remark}

\section{The resolvent problem, the Gelfand transform, and the main result}
\label{Gelfand_section}

In what follows we consider parameter-dependent families of operators described by expressions of the form (\ref{operator_class}). Aiming for quantitative results in periodic homogenisation theory, we restrict ourselves to the case when the coefficients $C, D$  are periodic and the parameter in the problem, which we denote by $\varepsilon,$ represents  their period ({\it cf.} parameter $n$ in (\ref{n_dependent})). We denote by $\widehat{C},$ 
$\widehat{D}$ the 1-periodic functions related to $C=C^\varepsilon,$ $D=D^\varepsilon$ by the formulae
\begin{equation}
\widehat{C}(\cdot)=C^\varepsilon(\varepsilon\cdot),\ \ \ \ \ \widehat{D}(\cdot)=D^\varepsilon(\varepsilon\cdot).
\label{CD_scaling}
\end{equation}

\begin{definition}\label{d:cf}
We consider the space
\[
   L_\#^\infty(\mathbb{R})\coloneqq\bigl\{ f\in L^\infty(\mathbb{R}): f(\cdot + 1)=f(\cdot)\bigr\}
\]
and for $\gamma>0$ define the set
\[
   \mathcal{M}_\gamma\coloneqq\bigl\{ M\in\bigl[L_\#^\infty(\mathbb{R})\bigr]^{2\times 2}:\ \Re M(x)\geq \gamma 1_{2\times 2}\ {\rm a.e.}\ x\in \mathbb{R}\bigr\}
\]
as well as a mapping ${\tt av}: {\mathcal M}_\gamma\to{\mathbb C}^{2\times 2},$ by the formula
\[
\av(M)\coloneqq \int_0^1 M,\quad\quad M\in \mathcal{M}_\gamma.
\]
\end{definition}

Suppose that $\varepsilon$-periodic functions $C^\varepsilon,$ $D^\varepsilon$ satisfy the conditions of 
Corollary \ref{c:st} for some $\nu_0>0,$ and consider the 1-periodic functions $\widehat{C}, \widehat{D}\in L_\#^\infty({\mathbb R})$ that are related to $C^\varepsilon,$ $D^\varepsilon$ for each $\varepsilon>0$ by the formulae (\ref{CD_scaling}). Notice that there exists $\mu\ge\nu_0$ such that for all $\nu>\mu$ and $\xi\in{\mathbb R}$ one has ({\it cf.} the discussion after Corollary \ref{c:st})
\begin{equation}
({\rm i}\xi+\nu)\begin{pmatrix}
      1 & 0 \\[0.3em] 0 & \bigl(\widehat{C}+\widehat{D}({\rm i}\xi+\nu)^\alpha\bigr)^{-1}
   \end{pmatrix} \in \mathcal{M}_\gamma,
\label{viscoelastic_matrix}
\end{equation}
for some $\gamma=\gamma(\nu_0).$ In Section  \ref{viscoelasticity_treatment}, where we apply the estimates of Theorem \ref{t:mt} below to the periodic viscoelasticity problems in ${\mathbb R},$  we will use the fact that 
the constant $\gamma$ in (\ref{viscoelastic_matrix}) 
is independent of $\varepsilon.$ 

The next step of our approach is to study the asymptotic behaviour as $\epsilon\to 0$ of the inverse of the operator 
\begin{equation}\label{e:mo}
    M\bigg(\frac{\cdot}{\eps}\bigg) - \begin{pmatrix} 0 & \partial_x \\[0.25em] \partial_x & 0 \end{pmatrix}
\end{equation}
in $\bigl[L^2(\mathbb{R})\bigr]^2$ for $M\in \mathcal{M}_\gamma$. Our strategy is based on applying the Gelfand transform (see \cite{Gelfand}) to the expression (\ref{e:mo}) and analysing the asymptotic behaviour of the elements of the associated fibre decomposition as $\varepsilon\to0.$

\begin{definition} Let $Q\coloneqq [0,1)$, $Q'\coloneqq [-\pi,\pi)$. For all $\epsilon>0$ we define the Gelfand transform (see \cite{Gelfand})
\[
   {\mathcal G}_\varepsilon \colon L^2(\mathbb{R})\to L^2(\epsilon^{-1}Q'\times Q)
\]
as the continuous extension to $L^2({\mathbb R})$ of the mapping given by
\[
   ({\mathcal G}_\varepsilon f)(\theta,y)\coloneqq \frac{\eps}{\sqrt{2\pi}}\sum_{n\in\mathbb{Z}} f\bigl(\epsilon(y+n)\bigr)e^{-{\rm i}\epsilon\theta(y+n)},\quad\ f\in C_{\rm c}^\infty(\mathbb{R}),\ \theta\in \epsilon^{-1}Q',\ y\in Q.
\] 
\end{definition}

Using the fact that the mapping ${\mathcal G}_\varepsilon$ is unitary, we rewrite the operator in \eqref{e:mo} according to the following result, see \cite[Lemma 3.1]{Cherednichenko2015}.
\begin{lemma}
\label{l:mog} 
Let $\gamma>0$, $M\in \mathcal{M}_\gamma$. Then for $\epsilon>0$ one has
\[
    {\mathcal G}_\varepsilon\Bigg(M\bigg(\frac{\cdot}{\eps}\bigg) - \begin{pmatrix} 0 & \partial_x \\[0.25em] \partial_x & 0 \end{pmatrix}\Bigg)^{-1}{\mathcal G}_\varepsilon^* = \int_{\epsilon^{-1}Q'}^\oplus \Bigg(M(\cdot) - \frac{1}{\epsilon}\begin{pmatrix} 0 & \partial_{\#}+{\rm i}\epsilon\theta \\[0.25em] \partial_{\#}+{\rm i}\epsilon\theta & 0 \end{pmatrix}\Bigg)^{-1} {\rm d}\theta,
\] 
where the operators under the integral on the right-hand side are defined and bounded on the space $\bigl[L^2(Q)\bigr]^2$ for each $\theta\in\varepsilon^{-1}Q'.$
\end{lemma}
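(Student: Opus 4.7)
The strategy is to show that the Gelfand transform $\mathcal{G}_\varepsilon$ intertwines the operator $A_\varepsilon := M(\cdot/\varepsilon) - \bigl(\begin{smallmatrix} 0 & \partial_x \\ \partial_x & 0\end{smallmatrix}\bigr)$ with a direct integral of fibre operators, and that each fibre is boundedly invertible; then one inverts under the direct integral. I would handle the two summands of $A_\varepsilon$ separately.

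First, for the multiplication operator, I would insert $M(\cdot/\varepsilon)f$ into the defining series of $\mathcal{G}_\varepsilon$ and use the $1$-periodicity $M(y+n)=M(y)$ for $n\in\mathbb{Z}$ to pull $M(y)$ outside the sum. This yields $\mathcal{G}_\varepsilon\bigl[M(\cdot/\varepsilon)f\bigr](\theta,y)=M(y)(\mathcal{G}_\varepsilon f)(\theta,y)$, which exhibits the multiplication part as a direct integral of pointwise multiplications by the matrix $M(\cdot)$ on $[L^2(Q)]^2$.

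For the derivative, I would compute on a smooth test function $f\in C_{\rm c}^\infty(\mathbb{R})$, using the chain rule on each term $f(\varepsilon(y+n))e^{-{\rm i}\varepsilon\theta(y+n)}$ of the series. A short calculation produces $\mathcal{G}_\varepsilon(\partial_x f)(\theta,y) = \varepsilon^{-1}(\partial_y+{\rm i}\varepsilon\theta)(\mathcal{G}_\varepsilon f)(\theta,y)$. Because the defining sum ranges over all integer shifts, the resulting fibre function $y\mapsto(\mathcal{G}_\varepsilon f)(\theta,y)$ is automatically $1$-periodic, so the derivative $\partial_y$ has to be read as $\partial_\#$, the derivative with periodic boundary conditions on $Q$. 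Combining the two computations and using unitarity of $\mathcal{G}_\varepsilon$ and density of $C_{\rm c}^\infty$, I would conclude that
\[
\mathcal{G}_\varepsilon A_\varepsilon \mathcal{G}_\varepsilon^* \;=\; \int_{\varepsilon^{-1}Q'}^{\oplus} \!\Bigl(M(\cdot) - \tfrac{1}{\varepsilon}\bigl(\begin{smallmatrix} 0 & \partial_{\#}+{\rm i}\varepsilon\theta \\[0.1em] \partial_{\#}+{\rm i}\varepsilon\theta & 0 \end{smallmatrix}\bigr)\Bigr)\,\mathrm{d}\theta.
\]

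Next I would establish invertibility of each fibre with a uniform bound. The off-diagonal block $\bigl(\begin{smallmatrix} 0 & \partial_\#+{\rm i}\varepsilon\theta \\ \partial_\#+{\rm i}\varepsilon\theta & 0 \end{smallmatrix}\bigr)$ is skew-selfadjoint on $[L^2(Q)]^2$ since $\partial_\#$ is, and hence its contribution to the real part of the fibre operator vanishes. Together with the assumption $\Re M(\cdot)\geq\gamma$ a.e., this yields the quadratic-form estimate $\Re\langle A_\varepsilon(\theta)u,u\rangle\geq\gamma\|u\|^2$ on the domain of $\partial_\#\oplus\partial_\#$, so each fibre $A_\varepsilon(\theta)$ is continuously invertible with $\|A_\varepsilon(\theta)^{-1}\|\leq 1/\gamma$, uniformly in $\theta\in\varepsilon^{-1}Q'$. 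This uniform bound legitimises inversion inside the direct integral, and the desired identity follows from $\bigl(\int^\oplus A(\theta)\,\mathrm{d}\theta\bigr)^{-1}=\int^\oplus A(\theta)^{-1}\,\mathrm{d}\theta$.

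The main technical obstacle is the second step: the derivative computation must be done carefully enough to identify the fibre-side domain (the periodic Sobolev space $H_\#^1(Q)$) as the natural domain produced by the sum over integer shifts, and to extend the intertwining from $C_{\rm c}^\infty(\mathbb{R})$ to the closure of the full first-order operator on $[L^2(\mathbb{R})]^2$. Once the domain bookkeeping is correctly handled, the rest of the argument is a straightforward application of the direct-integral calculus combined with the accretivity provided by $M\in\mathcal{M}_\gamma$.
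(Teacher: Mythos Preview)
The paper does not actually prove this lemma; it merely cites \cite[Lemma~3.1]{Cherednichenko2015} and states the result. Your outline---computing the action of $\mathcal{G}_\varepsilon$ on the periodic multiplication and on $\partial_x$ separately, identifying the fibre domain as $H^1_\#(Q)$, and then invoking the accretivity $\Re M\ge\gamma$ together with skew-selfadjointness of the off-diagonal block to get uniform fibrewise invertibility---is exactly the standard argument behind that cited result and is correct as stated.
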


In view of the above lemma, we are now concerned with the asymptotic behaviour, as $\varepsilon\to0,$ of the resolvents
\begin{equation}
 \Bigg(M - \frac{1}{\epsilon}\begin{pmatrix} 0 & \partial_{\#}+{\rm i}\epsilon\theta \\[0.25em] \partial_{\#}+{\rm i}\epsilon\theta & 0 \end{pmatrix}\Bigg)^{-1},\ \ \ \ \ \ \ \theta\in\varepsilon^{-1}Q',
 \label{fibre_inverse}
\end{equation}
where for convenience we drop the reference to the spatial argument of $M.$ 
In Section \ref{proof_thm} we establish the following result, which is similar in spirit to \cite[Theorem 7.1]{Cherednichenko2015}.

\begin{theorem}\label{t:mt} Suppose that  $\gamma>0$, $M\in \mathcal{M}_\gamma$. Then there exist $\eps'>0,$ $K>0$ such that 
\[
\Bigl\| \big( M-\epsilon^{-1}A_{\eps\theta}\big)^{-1} -\big(\av(M)-\epsilon^{-1}A_{\eps\theta}\big)^{-1}\Bigr\|_{{\mathbb L}([L^2(0,1)]^2)}\leq K\epsilon\ \ \ \ \ \ \ \ (\eps\le\eps',\ \ \theta\in \epsilon^{-1}Q'),
\]
where 
\begin{equation}
A_{\tau}\coloneqq \begin{pmatrix} 0 & \partial_{\#}+{\rm i}\tau \\[0.25em] \partial_{\#}+{\rm i}\tau & 0 \end{pmatrix},\ \ \ \ \ \tau\in Q'. 
\label{star0}
\end{equation}

Further, for the case when 
\begin{equation}
M=({\rm i}\xi+\nu)\begin{pmatrix}
      1 & 0 \\[0.3em] 0 & \bigl(\widehat{C}+\widehat{D}({\rm i}\xi+\nu)^\alpha\bigr)^{-1}
   \end{pmatrix},\ \ \ \ \ \ \ \xi \in \R,
\label{visco_matrix0}
\end{equation}
$\nu>\mu$ and $\widehat{C},$ $\widehat{D}$ satisfy the conditions discussed above (where $\mu\geq \nu_0$), the estimate
\begin{equation}
K\le \kappa(\xi^2+1),\ \ \ \xi\in{\mathbb R},
\label{xi_estimate}
\end{equation}
holds for some $\kappa>0$ independent of $\xi\in{\mathbb R}$ and $\varepsilon'$.
\end{theorem}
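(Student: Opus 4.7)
My plan is to exploit the spectral decomposition of $A_\tau$ at its lowest eigenvalues, splitting $[L^2(0,1)]^2$ as the two-dimensional subspace of constants and its orthogonal complement. Set $\tau := \epsilon\theta \in Q'$ and denote by $P$ the orthogonal projection onto $\C^2$-valued constants; pointwise $Pf = \av(f)$, so $P$ coincides with the averaging operator whose action on operator spaces is studied in Section \ref{averaging}. A direct eigenfunction calculation for $A_\tau$ (carried out in Section \ref{sec:ortho_decom}) gives $\|A_\tau^{-1}|_{I-P}\| \leq 1/\pi$ uniformly in $\tau\in Q'$. Set $B := M-\epsilon^{-1}A_\tau$ and $B_0 := \av(M)-\epsilon^{-1}A_\tau$. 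Both are continuously invertible with $\|B^{-1}\|,\|B_0^{-1}\|\leq 1/\gamma$, since $A_\tau$ is skew-adjoint and $\Re M,\Re\av(M)\geq\gamma$. The constant matrix $\av(M)$ commutes with $P$, so $B_0$ respects the splitting $I=P+(I-P)$.

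The main identity is
\[
B^{-1}-B_0^{-1}=B_0^{-1}\bigl(\av(M)-M\bigr)B^{-1},
\]
and I would bound the right-hand side via two cancellations. First, the $(I-P)$-component of $B^{-1}f$ is $O(\epsilon)$: with $u:=B^{-1}f$, $\bar u:=Pu$, $v:=(I-P)u$, projecting $Bu=f$ onto $(I-P)$ yields $\epsilon^{-1}A_\tau v=(I-P)(Mu)-(I-P)f$, whence
\[
\|v\|\leq \frac{\epsilon}{\pi}\bigl(\|M\|_\infty\|u\|+\|f\|\bigr)\leq C\epsilon\bigl(\|M\|_\infty+1\bigr)\|f\|,
\]
using $\|u\|\leq\gamma^{-1}\|f\|$. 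The same argument applied to $B_0$ gives $\|B_0^{-1}|_{I-P}\|\leq C\epsilon(\|M\|_\infty+1)$. Second, the multiplication operator $\av(M)-M$ has zero mean, so $P[(\av(M)-M)\bar u]=0$ for every constant $\bar u$; this is the averaging cancellation extracted in Section \ref{averaging}.

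Combining, split $(\av(M)-M)u=(\av(M)-M)\bar u+(\av(M)-M)v$. The zero-mean property yields $\|P[(\av(M)-M)u]\|=\|\av((\av(M)-M)v)\|\leq 2\|M\|_\infty\|v\|\leq C\epsilon\|M\|_\infty(\|M\|_\infty+1)\|f\|$, while trivially $\|(I-P)[(\av(M)-M)u]\|\leq 2\gamma^{-1}\|M\|_\infty\|f\|$. Applying $B_0^{-1}$ block-wise,
\[
\|(B^{-1}-B_0^{-1})f\|\leq \frac{C\epsilon\|M\|_\infty(\|M\|_\infty+1)}{\gamma}\|f\|+C\epsilon(\|M\|_\infty+1)\cdot\frac{2\|M\|_\infty}{\gamma}\|f\|\leq K\epsilon\|f\|,
\]
with $K=C'(\|M\|_\infty^2+\|M\|_\infty+1)$ and $C'=C'(\gamma)$. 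This establishes the first assertion (with any $\epsilon'>0$).

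For the viscoelasticity matrix (\ref{visco_matrix0}), the hypothesis $\widehat C+\widehat D\nu_0^\alpha\geq c>0$ combined with $\Re(\i\xi+\nu)^\alpha\geq 0$ for $\alpha\in(0,1]$ yields a uniform lower bound $|\widehat C(x)+\widehat D(x)(\i\xi+\nu)^\alpha|\geq c'>0$, with $c'$ independent of $\xi\in\R$, $\nu>\mu$ and $x\in\R$. Consequently $\|M\|_\infty\leq\kappa_0(|\xi|+1)$ uniformly, and the previous bound gives $K\leq\kappa(\xi^2+1)$ as claimed. The principal technical obstacle I anticipate is the uniform-in-$\theta$ bound $\|A_\tau^{-1}|_{I-P}\|\leq 1/\pi$, which is a quantitative Poincar\'e-type inequality for mean-zero periodic vector fields twisted by $\tau\in[-\pi,\pi)$; once this is in place, the two $(I-P)$-smallness estimates combine with the $\av$-cancellation mechanically to give the claimed quadratic growth of $K$ in $\|M\|_\infty$.
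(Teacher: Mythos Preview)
Your argument is correct and rests on the same three ingredients as the paper---the orthogonal splitting $[L^2(0,1)]^2=N_\#\oplus R_\#$, the uniform bound $\|A_{\tau,\mathrm r}^{-1}\|\le 1/\pi$, and the accretivity $\Re M\ge\gamma$---but it organises them differently. The paper proceeds via a block Schur-complement inversion formula (Lemma~\ref{l:s}, Theorem~\ref{t:rinv}): it inverts $\epsilon M-A_{\epsilon\theta}$ and $\epsilon\av(M)-A_{\epsilon\theta}$ explicitly in block form, using Neumann series to control $Y_{\epsilon\theta}^{-1}$ and the reduced operator $X_{\epsilon\theta}^{-1}$, and then bounds the difference of the two $2\times2$ block expressions term by term. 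This produces the estimate only for $\epsilon<\epsilon'=\min(\epsilon_0,\epsilon_1)$, and a separate trivial accretivity bound is invoked to cover all $\epsilon$. Your route is the resolvent identity $B^{-1}-B_0^{-1}=B_0^{-1}(\av(M)-M)B^{-1}$ together with two ``$(I-P)$ is $O(\epsilon)$'' estimates obtained directly from the equation and the a~priori bound $\|B^{-1}\|,\|B_0^{-1}\|\le1/\gamma$; the mean-zero cancellation $P[(\av(M)-M)\bar u]=0$ then closes the argument. This avoids both the Schur complement algebra and the Neumann series, gives the bound for every $\epsilon>0$ in one stroke, and makes the quadratic dependence $K\lesssim\|M\|_\infty^2$ transparent. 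The paper's block representation is more structural and would be the natural starting point for higher-order correctors, but for the first-order estimate stated in Theorem~\ref{t:mt} your approach is the more economical one.
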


Note that the above theorem is in line with the result of \cite{Waurick2014SIAM_HomFrac}, where the homogenised coefficient is also given by $\av(M)$. The proof of Theorem \ref{t:mt} requires some preliminary work, which is essentially concerned with understanding the structure of the operator (\ref{fibre_inverse}) with respect to the subspace of $\bigl[L^2(Q)\bigr]^2$ consisting of vectors whose components belong to the null-space of the operator $\partial_{\#},$ {\it i.e.} constant functions on $Q.$ 
We start our analysis by discussing basic properties of the operator $A_{\eps\theta}$ defined by (\ref{star0})
and deriving its representation as a direct sum with respect to the associated orthogonal decomposition of the space $\bigl[L^2(Q)\bigr]^2.$ 

Combining Theorem \ref{t:mt} and Lemma \ref{l:mog}, we obtain the following norm-resolvent estimate in $[L^2({\mathbb R})]^2.$
\begin{corollary}
Let $\gamma>0$, $M\in \mathcal{M}_\gamma.$ Then there exist $\eps'>0,$ $\widetilde{K}=\widetilde{K}(\gamma)>0$ such that
\[
\Bigg\| \Bigg(M(\cdot/\epsilon) - \begin{pmatrix} 0 & \partial_{x} \\[0.25em] \partial_{x} & 0 \end{pmatrix}\Bigg)^{-1}- \Bigg(\av(M) - \begin{pmatrix} 0 & \partial_{x} \\[0.25em] \partial_{x} & 0 \end{pmatrix}\Bigg)^{-1}\Bigg\|_{{\mathbb L}([L^2(\mathbb{R})]^2)}\leq\widetilde{K}\epsilon\ \ \ \ \ \ \ (\eps\le \eps').
\]
\end{corollary}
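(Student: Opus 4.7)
The plan is to deduce the corollary directly from Lemma \ref{l:mog} and Theorem \ref{t:mt} using the unitarity of the Gelfand transform and the fact that the operator norm on a direct-integral space equals the essential supremum of the fibrewise norms.

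First, I would apply Lemma \ref{l:mog} to $M$ to rewrite the resolvent of $M(\cdot/\epsilon)-\bigl(\begin{smallmatrix} 0 & \partial_x \\ \partial_x & 0 \end{smallmatrix}\bigr)$ as the fibre integral $\int^\oplus_{\epsilon^{-1}Q'}(M-\epsilon^{-1}A_{\epsilon\theta})^{-1}\,d\theta$. Next, I would verify that $\av(M)$, viewed as a constant (hence $1$-periodic) $L^\infty$-function, lies in $\mathcal{M}_\gamma$: indeed $\Re\av(M)=\av(\Re M)\geq\gamma\,1_{2\times 2}$. This means Lemma \ref{l:mog} applies equally to the constant matrix $\av(M)$ and, since $\av(M)(\cdot/\epsilon)=\av(M)$, it provides the fibre decomposition
\[
\mathcal{G}_\epsilon\Bigg(\av(M)-\begin{pmatrix} 0 & \partial_x \\ \partial_x & 0\end{pmatrix}\Bigg)^{-1}\mathcal{G}_\epsilon^{*}=\int_{\epsilon^{-1}Q'}^\oplus\bigl(\av(M)-\epsilon^{-1}A_{\epsilon\theta}\bigr)^{-1}d\theta.
\]

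Subtracting the two fibre-integral expressions and using that $\mathcal{G}_\epsilon$ is unitary, the $[L^2(\mathbb{R})]^2$-operator norm of the difference of the two resolvents coincides with the operator norm of the fibre integral of the fibrewise differences. Since the norm of a direct integral of bounded operators is the essential supremum of the fibrewise norms, this equals
\[
\operatorname*{ess\,sup}_{\theta\in\epsilon^{-1}Q'}\Bigl\| \bigl(M-\epsilon^{-1}A_{\epsilon\theta}\bigr)^{-1}-\bigl(\av(M)-\epsilon^{-1}A_{\epsilon\theta}\bigr)^{-1}\Bigr\|_{\mathbb{L}([L^2(Q)]^2)}.
\]
By Theorem \ref{t:mt}, there exist $\epsilon'>0$ and $K=K(\gamma)>0$ such that for all $\epsilon\leq\epsilon'$ and all $\theta\in\epsilon^{-1}Q'$ the above fibrewise norm is bounded by $K\epsilon$. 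Setting $\widetilde{K}:=K$ yields the claim.

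The argument is essentially bookkeeping: the heavy lifting has been done in Theorem \ref{t:mt}, which supplies a bound uniform in $\theta$. The only non-trivial point to check is the closure property $\av(M)\in\mathcal{M}_\gamma$, which is immediate, so I do not expect any serious obstacle. If any subtlety arises, it would be a measurability check needed to justify that the difference of the two fibrewise resolvents assembles into a bona fide decomposable operator, which follows from the continuity in $\theta$ of each fibre resolvent that is already implicit in Lemma \ref{l:mog}.
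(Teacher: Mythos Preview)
Your proposal is correct and follows exactly the route the paper indicates: the corollary is stated immediately after the sentence ``Combining Theorem \ref{t:mt} and Lemma \ref{l:mog}, we obtain the following norm-resolvent estimate,'' and your argument supplies precisely that combination, together with the observation $\av(M)\in\mathcal{M}_\gamma$ (which is Lemma \ref{l:avinv} in the paper). The only cosmetic point is that Theorem \ref{t:mt} as stated does not record the dependence of $K$ on $\gamma$ alone, so your assertion $K=K(\gamma)$ relies on inspecting the proof of Theorem \ref{t:mt} rather than its statement; this matches the paper's own phrasing of the corollary.
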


\section{Orthogonal decomposition for the operator $A_{\epsilon\theta},$ $\theta\in\varepsilon^{-1}Q$}
\label{sec:ortho_decom}

In this section, we fix $\epsilon>0$ and $\theta\in \epsilon^{-1}Q',$ so that $\eps\theta=:\tau\in Q'.$ As it has been observed in \cite{Cherednichenko2015}, the nullspace of the operator $A_{\tau}$
is of primary importance to compute the limit problem. For purposes of the present work, it suffices to describe
the behaviour of $A_{\varepsilon \theta}$ in terms of the spectral subspaces of $\begin{pmatrix} 0 & \partial_{\#} \\ \partial_{\#}& 0 \end{pmatrix}$. 
To this end, we gather some well-known facts on the one-dimensional derivative with periodic boundary conditions, including the Poincar\'e--Wirtinger inequality, see \emph{e.g.} \cite{Almansi1905}.

\begin{proposition}
\label{l:1d} 
Consider the operator
\[
  \partial_\# \colon L^2(0,1)\supseteq H^1_\#(0,1)\ni f\mapsto f'\in L^2(0,1),
\]
where $H_\#^1(0,1)=\bigl\{f\in H^1(0,1): f(0)=f(1)\bigr\}$. Then the following assertions hold.
\begin{enumerate}
 \item The null-space of the operator $\partial_\#$ consists of constants: $N(\partial_\#)=
 \mathbb{C}.
 $
 \item There exists $c>0$ such that 
 \[
    \biggl\|f-\int_0^1f
    \biggr\|_{L^2(0,1)}\leq c\|f'\|_{L^2(0,1)}\ \ \ \ \bigl(f\in H_\#^1(0,1)\bigr).
 \]
 \item The orthogonal projection $Pf\in N(\partial_\#)^\bot$ of $f\in L^2(0,1)$ is given by
 \[
    Pf = f- \int_0^1 f.
 \]
 \item The range  $R(\partial_\#)$ of the operator $\partial_\#$ is closed in $L^2(0,1).$
 \item The operator $\partial_\#$ has compact resolvent and $\sigma(\partial_\#)=\{2\pi k{\rm i}; k\in \mathbb{Z}\}$.
 \item The operator $\partial_{\#,{\rm r}} \colon R(\partial_\#)\supseteq H^1_\#(0,1) \cap R(\partial_\#)\ni f\mapsto f'\in R(\partial_\#)$
  is continuously invertible.
\end{enumerate} 
\end{proposition}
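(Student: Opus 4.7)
The plan is to diagonalise $\partial_\#$ via the orthonormal Fourier basis of $L^2(0,1)$, whence every assertion becomes a short computation. Set $e_k(x)\coloneqq e^{2\pi\i k x}$ for $k\in\Z$, $x\in(0,1)$. Each $e_k$ lies in $H_\#^1(0,1)$ since $e_k(0)=e_k(1)=1$, and $\partial_\# e_k = 2\pi\i k\, e_k$, so $\{e_k\}_{k\in\Z}$ is an orthonormal basis of $L^2(0,1)$ consisting of eigenvectors of $\partial_\#$. This immediately yields (a), as $e_0\equiv 1$ spans the zero eigenspace, and (c), since the orthogonal projection onto $N(\partial_\#)=\mathbb{C}\,e_0$ is $f\mapsto\langle f,e_0\rangle_{L^2}e_0=\int_0^1 f$.

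For (b), I would expand $f\in H_\#^1(0,1)$ as $f=\sum_k \hat f_k e_k$ with $\hat f_k=\langle f,e_k\rangle_{L^2}$ and apply Parseval to both $f-\int_0^1 f=\sum_{k\ne 0}\hat f_k e_k$ and $f'=\sum_k 2\pi\i k\,\hat f_k e_k$:
\[
\Bigl\|f-\int_0^1 f\Bigr\|_{L^2}^2 = \sum_{k\ne 0}\abs{\hat f_k}^2 \le \sum_{k\ne 0} k^2 \abs{\hat f_k}^2 = \frac{1}{4\pi^2}\norm{f'}_{L^2}^2,
\]
the middle inequality using $k^2\ge 1$ for $k\in\Z\setminus\{0\}$, so $c=1/(2\pi)$ works.

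For (d), the inclusion $R(\partial_\#)\subseteq N(\partial_\#)^\perp$ follows from $\int_0^1\partial_\# f = f(1)-f(0)=0$ for $f\in H_\#^1(0,1)$, and the reverse inclusion from the observation that any $g\in L^2(0,1)$ with $\int_0^1 g=0$ has primitive $x\mapsto\int_0^x g\in H_\#^1(0,1)$ mapping to it under $\partial_\#$. Hence $R(\partial_\#)=N(\partial_\#)^\perp$ is closed. For (f), the same identification makes $\partial_{\#,\r}$ bijective --- injectivity from $N(\partial_\#)\cap R(\partial_\#)=\{0\}$, surjectivity from the reverse inclusion just noted --- and since every $f\in H_\#^1(0,1)\cap R(\partial_\#)$ has $\int_0^1 f=0$, the estimate (b) reads $\norm{f}_{L^2}\le c\norm{\partial_{\#,\r} f}_{L^2}$, which is exactly the assertion that $\partial_{\#,\r}^{-1}$ is bounded with norm at most $c$.

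Finally, for (e), integration by parts on $H_\#^1(0,1)$ yields $\partial_\#^*=-\partial_\#$, so $\i\partial_\#$ is self-adjoint; the spectral decomposition furnished by $\{e_k\}_{k\in\Z}$ gives $\sigma(\partial_\#)=\{2\pi k\i:k\in\Z\}$ and, for any $\lambda\in\rho(\partial_\#)$, realises $(\partial_\#-\lambda)^{-1}$ as a diagonal operator with eigenvalues $(2\pi\i k-\lambda)^{-1}\to 0$ as $\abs k\to\infty$, which is therefore compact. The whole proposition thus reduces to standard Fourier-series manipulations with no substantial obstacle; the only care needed is to track the periodic condition $f(0)=f(1)$ when integrating by parts and constructing primitives.
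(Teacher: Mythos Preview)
Your proof is correct and takes a different route from the paper's. The paper argues each item by abstract functional-analytic means: (b) is simply cited as the Poincar\'e--Wirtinger inequality without an explicit constant, (d) is proved by a Cauchy-sequence argument combining (b) with the closedness of $\partial_\#$, (e) invokes the Arzel\`a--Ascoli theorem for compactness of the resolvent and then solves the eigenvalue ODE directly, and (f) is recorded as a consequence of the closed range. You instead diagonalise $\partial_\#$ explicitly via the Fourier basis $\{e_k\}$ and reduce every item to an $\ell^2$ computation; this buys you the sharp constant $c=1/(2\pi)$ in (b) and the direct identification $R(\partial_\#)=N(\partial_\#)^\perp$, both more informative than what the paper records. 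One small omission in your argument for surjectivity in (f): the primitive $x\mapsto\int_0^x g$ constructed in (d) lies in $H_\#^1(0,1)$ but need not have mean zero, so it need not lie in $R(\partial_\#)$; subtracting its mean gives the required preimage in $H_\#^1(0,1)\cap R(\partial_\#)$.
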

\begin{proof}
Claim (a) is a straightforward observation and claim (b) is the standard Poincar\'e--Wirtinger inequality. To establish claim (c), it suffices to observe that for all $\alpha\in N(\partial_\#)$ we have
 \[
    \biggl\langle \alpha,f-\int_0^1f\biggr\rangle = \alpha^* \int_0^1\biggl(f(t)-\int_0^1f\biggr)dt = 0,
 \]
 which characterises the projection $Pf$. For claim (d), let $f_n
\in H_\#^1(0,1),$ $n\in{\mathbb N}$ be such that $f'_n\to
g\in L^2(0,1)$. Then, by the second property above, the functions  
$f_n-\int_0^1 f_n$ converge in $L^2(0,1)$ 
to some $h\in L^2(0,1)$. Observe that $h_n'=f_n'$ for all $n\in \mathbb{N}$. Since $\partial_\#$ is a closed operator, it follows that $h\in D(\partial_\#)$ and $h'=g\in R(\partial_\#)$.
 The operator $\partial_\#$ has compact resolvent by the Arzela--Ascoli Theorem (see {\it e.g.} \cite[Theorem I.28]{Reed_SimonI}). Hence, the spectrum of $\partial_\#$ solely consists of a countable set of eigenvalues, and since $\partial_\#=-\partial_\#^*$, we infer that it is a subset of the imaginary axis. Thus, ${\rm i}\lambda\in \sigma(\partial_\#)$ if and only if $\lambda\in \R$ and $\phi'={\rm i}\lambda \phi$ for some $\phi\in H_\#^1(0,1)$. Clearly, the last eqaution is solvable if and only if $\lambda\in 2\pi \mathbb{Z},$ which yields claim (e). Finally, claim (f) is a consequence of the fact that $R(\partial_\#)$ is closed.
 \end{proof}

 As a corollary of Proposition \ref{l:1d}, we obtain an alternative representation of the operators $\partial_\#+{\rm i}\eps\theta,$
 $\theta\in\varepsilon^{-1}Q.$ We use the reasoning employed in \cite{Waurick2014MMAS_Frac}.
 \begin{definition} The canonical injection from the nullspace of $N(\partial_\#)$ into $L^2(0,1)$ is denoted by
 \[
    \iota_{\textnormal{n}} \colon N(\partial_\#) \hookrightarrow L^2(0,1),\ \ f\mapsto f.
 \]
 Further, we set
 \[
    \iota_{\textnormal{r}} \colon R(\partial_\#)  \hookrightarrow L^2(0,1),\ \ f\mapsto f.
 \]  
 \end{definition}
  \begin{remark} It is straightforward to see ({\it cf.} \cite[Lemma 3.2]{Waurick2014MMAS_Frac}) that 
  \[
     \iota_{\textnormal{r}}^*\colon L^2(0,1)\to R(\partial_\#)
  \]
  is the orthogonal projection \emph{onto} $R(\partial_\#)$. The orthogonal projection with target space $L^2(0,1)$ acts as $\iota_{\textnormal{r}}^*$ and is given by $\iota_{\textnormal{r}}\iota_{\textnormal{r}}^*$.
 Likewise, $\iota_{\textnormal{n}}^*$ is the projection onto $N(\partial_\#),$ and by Lemma \ref{l:1d} the surjective operator $\iota_{\textnormal{n}}^*$ act as
  \[
     \iota_{\textnormal{n}}^*f= \int_0^1f, \quad\quad f\in L^2(0,1).
  \]
  From $L^2(0,1)=N(\partial_\#)\oplus R(\partial_\#)$, it follows that
  \[
     1 = \iota_{\textnormal{n}}\iota_{\textnormal{n}}^*+  \iota_{\textnormal{r}}\iota_{\textnormal{r}}^*.
  \]
  \end{remark}

 \begin{corollary}\label{c:rep} We have
 \[
     \begin{pmatrix}\iota_{\textnormal{n}}^* \\[0.3em] \iota_{\textnormal{r}}^*\end{pmatrix}\big(\partial_\# + {\rm i}\tau\big) \begin{pmatrix}\iota_{\textnormal{n}} & \iota_{\textnormal{r}}\end{pmatrix}  = \begin{pmatrix}
                                         {\rm i}\tau & 0 \\[0.3em] 0 & \partial_{\#,{\rm r}}+{\rm i}\tau
                                      \end{pmatrix}, \quad\quad\tau\in Q',
                                      \]
                                      as an operator in the Hilbert space $N(\partial_\#)\oplus R(\partial_\#)=L^2(0,1).$
 \end{corollary}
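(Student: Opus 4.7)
The plan is a direct block-matrix computation of the four entries of the left-hand side, exploiting the orthogonal decomposition $L^2(0,1) = N(\partial_\#) \oplus R(\partial_\#)$ recorded in Proposition \ref{l:1d} and the remark preceding the corollary. The preparatory identities I would use are $\iota_{\textnormal{n}}^*\iota_{\textnormal{n}} = I_{N(\partial_\#)}$, $\iota_{\textnormal{r}}^*\iota_{\textnormal{r}} = I_{R(\partial_\#)}$, and $\iota_{\textnormal{n}}^*\iota_{\textnormal{r}} = 0$, $\iota_{\textnormal{r}}^*\iota_{\textnormal{n}} = 0$ (orthogonality of the two subspaces). Under the decomposition, the domain $H^1_\#(0,1) = D(\partial_\#)$ splits as $N(\partial_\#) \oplus \bigl(H^1_\#(0,1)\cap R(\partial_\#)\bigr)$, so that the right-hand side is defined on the correct domain.

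The key observation driving everything is that $\partial_\#$ maps $H^1_\#(0,1)$ into $R(\partial_\#)$: for any $g \in H^1_\#(0,1)$ one has $\int_0^1 g' = g(1)-g(0) = 0$, so $\partial_\# g \in N(\partial_\#)^\perp = R(\partial_\#)$. Consequently $\iota_{\textnormal{n}}^*\partial_\# = 0$. Combined with $\partial_\#\iota_{\textnormal{n}} = 0$ (since $\iota_{\textnormal{n}}$ embeds the null-space), the derivative contribution on the top row and the left column vanishes, and what remains in the bottom-right block is $\iota_{\textnormal{r}}^*\partial_\#\iota_{\textnormal{r}}$, which is exactly the restriction of $\partial_\#$ to $H^1_\#(0,1)\cap R(\partial_\#)$ mapping into $R(\partial_\#)$, i.e.\ $\partial_{\#,{\rm r}}$ by definition. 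For the $\mathrm{i}\tau$-multiple, each of the four blocks reduces to $\mathrm{i}\tau\,\iota_\alpha^*\iota_\beta$ with $\alpha,\beta\in\{\textnormal{n},\textnormal{r}\}$, giving $\mathrm{i}\tau$ on the diagonal and $0$ off-diagonal by the orthogonality relations above. Assembling the four blocks yields precisely the claimed matrix.

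There is no real obstacle here; the statement is essentially a restatement of the orthogonal decomposition combined with the fact that $R(\partial_\#)$ is invariant under $\partial_\#$. The only point that deserves a brief explicit mention is that both sides are viewed as operators in the Hilbert-space isomorphism $N(\partial_\#)\oplus R(\partial_\#)\cong L^2(0,1)$ realised by $\bigl(\iota_{\textnormal{n}}\ \iota_{\textnormal{r}}\bigr)$, so the identity holds as an equality of unbounded operators on the common domain described above.
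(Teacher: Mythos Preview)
Your argument is correct and follows essentially the same route as the paper's proof: split $\partial_\#+{\rm i}\tau$ into the derivative part and the scalar part, observe that multiplication by ${\rm i}\tau$ is diagonal with respect to the decomposition, and that $\partial_\#$ vanishes on the $N(\partial_\#)$ column and lands in $R(\partial_\#)$ so that only the bottom-right block $\partial_{\#,{\rm r}}$ survives. Your write-up is in fact more explicit than the paper's (you justify $\iota_{\textnormal{n}}^*\partial_\#=0$ via $\int_0^1 g'=0$ and discuss the domain splitting), whereas the paper simply records the two block identities and adds them.
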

 \begin{proof}
    Note that the operator of multiplication by the constant ${\rm i}\tau$ leaves both $N(\partial_\#)$ and $R(\partial_\#)$ invariant. Hence,
    \begin{equation}
     \begin{pmatrix}\iota_{\textnormal{n}}^* \\[0.3em] \iota_{\textnormal{r}}^*\end{pmatrix}{\rm i}\epsilon\theta \begin{pmatrix}\iota_{\textnormal{n}} & \iota_{\textnormal{r}}\end{pmatrix}  = \begin{pmatrix}
                                         {\rm i}\epsilon\theta & 0 \\[0.25em] 0 & {\rm i}\epsilon\theta
                                      \end{pmatrix}.
                                      \label{star1}
    \end{equation}
    Further, on $N(\partial_\#)$ we have $\partial_\# =0,$ hence ({\it cf.} the last property in Lemma \ref{l:1d}) we obtain
    \begin{equation}
     \begin{pmatrix}\iota_{\textnormal{n}}^* \\[0.3em] \iota_{\textnormal{n}}^*\end{pmatrix}\partial_\#\begin{pmatrix}\iota_{\textnormal{n}} & \iota_{\textnormal{r}}\end{pmatrix}  = \begin{pmatrix}
                                         0 & 0 \\[0.25em] 0 & \partial_{\#,{\rm r}},
                                      \end{pmatrix},   
       \label{star2}
     \end{equation}
 where $\partial_{\#,{\rm r}}$ is the operator defined in Proposition \ref{l:1d}(f). Combining (\ref{star1}) and (\ref{star2}) yields the assertion. \end{proof}
\begin{remark}\label{r:prt}
  We note here that by Proposition \ref{l:1d}, we obtain that for $\tau\in [-\pi,\pi)$ one has $0\in \rho(\partial_{\#,{\rm r}}+{\rm i}\tau)$. Moreover, we get
  \[
     \bigl\|(\partial_{\#,{\rm r}}+{\rm i}\tau)^{-1}\bigr\|_{{\mathbb L}(R_\#)}
     \leq\bigl(\dist\bigl(\sigma(\partial_{\#,{\rm r}}),-{\rm i}\tau\bigr)\bigr)^{-1}
     \leq \frac{1}{\pi}.
  \]
\end{remark}
                                      
The result corresponding to Corollary \ref{c:rep} for the $(2\times 2)$-block operator matrix that corresponds to $A_{\tau},$ $\tau\in Q',$ reads as follows:
\begin{theorem}\label{t:nsp} The nullspace of the operator
\begin{align*}
   \begin{pmatrix} 0 & \partial_{\#} \\[0.25em] \partial_{\#} & 0 \end{pmatrix}\colon [L^2(0,1)]^2\supseteq[H_\#^1(0,1)]^2\ni (f,g)\mapsto (g',f')\in &  [L^2(0,1)]^2
\end{align*}
is given by
\[
   N_\#\coloneqq {\mathbb C}^2
   \subseteq [L^2(0,1)]^2.
\] 
We denote
\[
   {R}_\#\coloneqq[R(\partial_\#)]^2\subseteq [L^2(0,1)]^2,\ \ \ 
\]
and
\[
  \iota_N\coloneqq \begin{pmatrix}
                     \iota_{\textnormal{n}} & 0 \\[0.25em] 0 & \iota_{\textnormal{n}}
                   \end{pmatrix},\quad \iota_R\coloneqq \begin{pmatrix}
                     \iota_{\textnormal{r}} & 0 \\[0.25em] 0 & \iota_{\textnormal{r}}
                   \end{pmatrix},
\]
which form the canonical injections from the spaces $N_\#={R}_\#^\bot$ and ${R}_\#$ into $[L^2(0,1)]^2,$ respectively. 

Then one has 
\[
   \begin{pmatrix}\iota_{N}^* \\[0.3em] \iota_{R}^*\end{pmatrix}A_{\tau}\begin{pmatrix}\iota_{N} & \iota_{R}\end{pmatrix} = \begin{pmatrix} \begin{pmatrix} 0 & {\rm i}\tau\\[0.25em] {\rm i}\tau & 0 \end{pmatrix} & \begin{pmatrix} 0 & 0\\[0.25em] 0& 0 \end{pmatrix} \\[1.2em] \begin{pmatrix} 0 & 0 \\[0.25em] 0 & 0 \end{pmatrix} & \begin{pmatrix} 0 & \partial_{\#,{\rm r}}+{\rm i}\tau \\[0.25em] \partial_{\#,{\rm r}}+{\rm i}\tau & 0 \end{pmatrix} \end{pmatrix},\quad\quad\tau\in Q',
\]
where the right-hand side is treated as an operator in the space 
\[
\big(N(\partial_\#)\oplus N(\partial_\#)\big) \oplus \big(R(\partial_\#)\oplus R(\partial_\#)\big)={N}_\#\oplus{R}_\#=[L^2(0,1)]^2.
\]
\end{theorem}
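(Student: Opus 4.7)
The plan is to reduce everything to Proposition \ref{l:1d} and Corollary \ref{c:rep} by expanding the quoted $2\times 2$--in--$2\times 2$ block expression and matching entries. The two claims of the theorem are logically independent: the first is a direct nullspace computation, and the second amounts to a componentwise application of Corollary \ref{c:rep}.

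For the nullspace assertion, I would simply note that a pair $(f,g)\in [H_\#^1(0,1)]^2$ satisfies $\begin{pmatrix} 0 & \partial_\# \\ \partial_\# & 0\end{pmatrix}\vect(f;g)=0$ precisely when $g'=0$ and $f'=0$, which by Proposition \ref{l:1d}(a) is equivalent to $f,g\in\mathbb{C}$. Hence the nullspace equals $\mathbb{C}^2$, and by Proposition \ref{l:1d}(c)--(d) this is exactly $R(\partial_\#)^\perp\oplus R(\partial_\#)^\perp = R_\#^\perp = N_\#$, as claimed, with the companion orthogonal identification $[L^2(0,1)]^2 = N_\#\oplus R_\#$.

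For the block representation, the strategy is to exploit the fact that both $\iota_N$ and $\iota_R$ are block diagonal with a single scalar injection ($\iota_{\rm n}$ or $\iota_{\rm r}$) repeated on the diagonal, while $A_\tau$ is anti-diagonal with entries $\partial_\#+{\rm i}\tau$. Concretely, I would write
\[
\begin{pmatrix}\iota_N^*\\[0.3em] \iota_R^*\end{pmatrix} A_\tau \begin{pmatrix}\iota_N & \iota_R\end{pmatrix}
= \begin{pmatrix}
\iota_N^* A_\tau \iota_N & \iota_N^* A_\tau \iota_R \\[0.3em]
\iota_R^* A_\tau \iota_N & \iota_R^* A_\tau \iota_R
\end{pmatrix},
\]
and for each of the four $2\times 2$ entries carry out the block multiplication, obtaining an anti-diagonal block whose nonzero entries are $\iota_{\bullet}^*(\partial_\#+{\rm i}\tau)\iota_{\bullet'}$ with $\bullet,\bullet'\in\{{\rm n},{\rm r}\}$. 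At this point Corollary \ref{c:rep} applies directly: the $({\rm n},{\rm n})$ entry equals ${\rm i}\tau$, the $({\rm r},{\rm r})$ entry equals $\partial_{\#,{\rm r}}+{\rm i}\tau$, and the mixed entries vanish. Reassembling these four anti-diagonal $2\times 2$ blocks yields exactly the right-hand side displayed in the theorem.

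There is no real obstacle beyond the bookkeeping — the only thing to watch is that the outer $2\times 2$ block structure refers to the decomposition $N_\#\oplus R_\#$, while the inner $2\times 2$ block structure comes from the original direct sum $L^2(0,1)\oplus L^2(0,1)$ of $A_\tau$; confusing these orderings is the one place the argument can go wrong. Once this is kept straight, the proof is a short verification reducing to Corollary \ref{c:rep}.
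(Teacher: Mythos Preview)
Your proposal is correct and is exactly the approach the paper takes: the paper's proof consists of the single sentence ``The proof is a straightforward consequence of Corollary \ref{c:rep} and Lemma \ref{l:1d},'' and what you have written is precisely the unpacking of that sentence. Your caution about the two nested $2\times 2$ block structures is well placed, but otherwise there is nothing to add.
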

\begin{proof}The proof is a straightforward consequence of Corollary \ref{c:rep} and Lemma \ref{l:1d}.
\end{proof}

\section{Averaging operator ${\tt av}$ and the null-space of $A_{\epsilon\theta}$ for $\epsilon=0,$ $\theta=0$}
\label{averaging}

In this section we establish the relationship between the averaging operator $\av$ 
and the projections introduced in the previous section, see in particular Theorem \ref{t:nsp}. 

\begin{theorem}\label{t:av} Suppose that $\gamma>0$, $M\in \mathcal{M}_\gamma$. Then the operator equality
\[
   \begin{pmatrix} \iota_{\textnormal{n}} & 0 \\[0.3em] 0 & \iota_{\textnormal{n}}
                   \end{pmatrix}^*M\begin{pmatrix} \iota_{\textnormal{n}} & 0 \\[0.3em] 0 & \iota_{\textnormal{n}}
                   \end{pmatrix} = \av(M)\begin{pmatrix} \iota_{\textnormal{n}} & 0 \\[0.3em] 0 & \iota_{\textnormal{n}}
                   \end{pmatrix}^*\begin{pmatrix} \iota_{\textnormal{n}} & 0 \\[0.3em] 0 & \iota_{\textnormal{n}}
                   \end{pmatrix}
\] 
holds.
\end{theorem}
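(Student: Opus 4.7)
The plan is to unwrap the definitions carefully and reduce the claim to the pointwise identity that $M(x)$ acting on a constant vector, integrated over the unit cell, yields $\av(M)$ times that vector. Both sides are operators on the finite-dimensional space $N_\# \cong \mathbb{C}^2$, so it suffices to compute their action on an arbitrary element $v=(c_1,c_2)^T$ of $\mathbb{C}^2$.

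First I would settle the right-hand side. Since $\iota_{\textnormal{n}}\colon N(\partial_\#)\hookrightarrow L^2(0,1)$ is the inclusion of the one-dimensional space of constants into $L^2(0,1)$ and $\iota_{\textnormal{n}}^* f = \int_0^1 f$ (as recalled in the remark after the definition of $\iota_{\textnormal{n}}$), we have $\iota_{\textnormal{n}}^*\iota_{\textnormal{n}}=\mathrm{id}_{N(\partial_\#)}$ since $\iota_{\textnormal{n}}c = c$ as a constant function, and $\int_0^1 c \,dx = c$. Consequently the block-diagonal product on the right-hand side equals the identity on $N_\#$, and the right-hand side reduces to the matrix $\av(M)\in\mathbb{C}^{2\times 2}$ acting on $N_\#\cong\mathbb{C}^2$.

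Next I would compute the left-hand side on $v\in\mathbb{C}^2\cong N_\#$. The operator $\begin{pmatrix}\iota_{\textnormal{n}}&0\\0&\iota_{\textnormal{n}}\end{pmatrix}$ sends $v$ to the constant $[L^2(0,1)]^2$-valued function $x\mapsto v$. Multiplication by $M\in\mathcal{M}_\gamma$ then yields the function $x\mapsto M(x)v$. Finally, applying $\begin{pmatrix}\iota_{\textnormal{n}}^*&0\\0&\iota_{\textnormal{n}}^*\end{pmatrix}$ integrates each component over $(0,1)$, so we obtain
\[
   \int_0^1 M(x)v\,dx = \Bigl(\int_0^1 M(x)\,dx\Bigr)v = \av(M)\,v,
\]
where the interchange of integration and the fixed matrix action on $v$ is justified componentwise (each entry of $M$ is an $L^\infty_\#$ function, so the Bochner integral coincides with entrywise Lebesgue integration). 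This coincides with the right-hand side evaluated at $v$.

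Since $v\in\mathbb{C}^2$ was arbitrary and both sides are operators $N_\#\to N_\#$, the operator identity follows. There is no genuine obstacle here: the only bookkeeping to get right is the identification $N_\#=N(\partial_\#)\oplus N(\partial_\#)\cong\mathbb{C}^2$ and the fact that the block structure of the injections is compatible with the pointwise $2\times 2$ action of $M(x)$; once this is laid out, the computation is a one-line application of the definition of $\av$.
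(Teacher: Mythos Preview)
Your proof is correct and follows essentially the same approach as the paper: both reduce the identity to the direct computation that, for a constant vector $v\in\mathbb{C}^2$, one has $\int_0^1 M(x)v\,dx=\av(M)v$. The paper phrases this via the inner-product pairing $\langle M(\alpha,\beta)^T,(\gamma,\delta)^T\rangle_{[L^2(0,1)]^2}$ with two constant vectors, whereas you compute the action on a single vector, but this is the same verification.
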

\begin{proof}
Denote by $\pi_\textnormal{n}\coloneqq \begin{pmatrix} \iota_{\textnormal{n}} & 0 \\ 0 & \iota_{\textnormal{n}}
                   \end{pmatrix}\begin{pmatrix} \iota_{\textnormal{n}} & 0 \\ 0 & \iota_{\textnormal{n}}
                   \end{pmatrix}^*$ the orthogonal projection on $N_\#$ as an operator from $\bigl[L^2(0,1)\bigr]^2$ into itself.
 The assertion of the theorem is equivalent to the equality
 \[
    \pi_{\textnormal{n}} M\pi_{\textnormal{n}}=\av(M).
 \]
Hence, by the characterisation of the nullspace $N_\#$ in Theorem \ref{t:nsp}, we need to check whether for all $\alpha,\beta,\gamma,\delta\in \mathbb{C}$ one has
 \begin{equation}
    \Biggl\langle M \begin{pmatrix} \alpha\\[0.2em] \beta\end{pmatrix},\begin{pmatrix} \gamma\\[0.2em] \delta\end{pmatrix} \Biggr\rangle_{[L^2(0,1)]^2} = \Biggl\langle \av(M) \begin{pmatrix} \alpha\\[0.2em] \beta\end{pmatrix},\begin{pmatrix} \gamma\\[0.2em] \delta\end{pmatrix}\Biggr\rangle_{[L^2(0,1)]^2}
    \label{identity_check}
 \end{equation}
 holds. The identity (\ref{identity_check}) is verified directly and the assertion follows.
\end{proof}

For the analysis to follow, we record some simple facts.

\begin{lemma}\label{l:avinv} Suppose that $\gamma>0$, $M\in \mathcal{M}_\gamma$. Then $\av(M)\in \mathcal{M}_\gamma$. 
\end{lemma}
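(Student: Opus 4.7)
The plan is to verify the two defining conditions of $\mathcal{M}_\gamma$ for the constant matrix $\av(M)$. Since constants lie in $L_\#^\infty(\mathbb{R})$, membership in $\bigl[L_\#^\infty(\mathbb{R})\bigr]^{2\times 2}$ is immediate once one observes that $\av(M)$ has finite entries: the bound $\norm{\av(M)}\le\norm{M}_{[L^\infty]^{2\times 2}}$ follows from Jensen's inequality applied entrywise.

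The substantive point is the coercivity estimate $\Re\av(M)\ge\gamma 1_{2\times 2}$. Here I would argue by interchanging the operations $\Re$ and $\av$, both of which are linear. Explicitly,
\[
   \Re\av(M)=\tfrac12\bigl(\av(M)+\av(M)^*\bigr)=\int_0^1\tfrac12\bigl(M(x)+M(x)^*\bigr)\,dx=\int_0^1\Re M(x)\,dx,
\]
where I used that $\av$ commutes with the adjoint entrywise (since $\int_0^1\overline{M_{ij}}=\overline{\int_0^1 M_{ij}}$). By hypothesis $\Re M(x)-\gamma 1_{2\times 2}\ge 0$ for almost every $x\in [0,1]$, that is, $\langle (\Re M(x)-\gamma 1_{2\times 2})v,v\rangle_{\mathbb{C}^2}\ge 0$ for every $v\in\mathbb{C}^2$ and almost every $x$. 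Integrating this non-negative scalar function in $x\in[0,1]$ preserves non-negativity, so
\[
   \bigl\langle\bigl(\av(\Re M)-\gamma 1_{2\times 2}\bigr)v,v\bigr\rangle_{\mathbb{C}^2}=\int_0^1\bigl\langle\bigl(\Re M(x)-\gamma 1_{2\times 2}\bigr)v,v\bigr\rangle_{\mathbb{C}^2}\,dx\ge 0
\]
for every $v\in\mathbb{C}^2$, which is precisely $\Re\av(M)\ge\gamma 1_{2\times 2}$.

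There is no real obstacle here; the statement is a direct consequence of the linearity of $\av$ and the fact that the cone of positive semidefinite matrices is closed under integration of matrix-valued measurable functions. I expect the whole argument to fit in a few lines.
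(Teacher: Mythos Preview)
Your proposal is correct and follows essentially the same approach as the paper: both arguments reduce to the observations that $\Re\av(M)=\av(\Re M)$ and that $\av$ is monotone with respect to the pointwise semidefinite ordering, so the pointwise bound $\Re M(x)\ge\gamma 1_{2\times2}$ integrates to $\Re\av(M)\ge\gamma 1_{2\times2}$. You simply spell out the monotonicity step via the quadratic form, whereas the paper states it in one line.
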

\begin{proof}
 Note that $\Re\av(M)=\av(\Re M)$ and that $\av(M)\geq\av(N)$ provided $M(x)\geq N(x),$ in the sense that $M(x)-N(x)$ is non-negative, for a.e. $x\in \mathbb{R}$. The latter two observations together with $c1_{2\times 2}\in \mathcal{M}_\gamma$ imply the assertion.
\end{proof}

\begin{lemma}\label{l:avdia} Suppose that $\gamma>0$, $M\in \mathcal{M}_\gamma$. Then one has
\[
   \iota_R^*\av(M) \iota_R = \av(M) \iota_R^*\iota_R.
\] 
\end{lemma}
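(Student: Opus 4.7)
The plan is to reduce the claim to the statement that the constant $2\times 2$ complex matrix $\av(M)$, acting componentwise on $[L^2(0,1)]^2$, leaves the subspace $R_\# = R(\partial_\#)\oplus R(\partial_\#)$ invariant. Once this invariance is in hand, the identity follows by straightforward composition with $\iota_R$ and $\iota_R^*$.

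First I would recall that $\partial_\# = -\partial_\#^*$, which together with Proposition \ref{l:1d}(a),(d) yields the explicit description
\[
R(\partial_\#) = N(\partial_\#)^\perp = \Bigl\{f\in L^2(0,1):\ \textstyle\int_0^1 f = 0\Bigr\}.
\]
Writing $\av(M) = \begin{pmatrix} a & b \\ c & d \end{pmatrix}\in\mathbb{C}^{2\times 2}$ and acting on an arbitrary $(f,g)\in R_\#$, we find $\av(M)(f,g) = (af+bg,\,cf+dg)$, and both entries have vanishing mean since $f$ and $g$ do. Consequently $\av(M)R_\#\subseteq R_\#$.

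Given this invariance, the restriction $\av(M)|_{R_\#}\colon R_\#\to R_\#$ is well defined and satisfies $\av(M)\,\iota_R = \iota_R\,\av(M)|_{R_\#}$. Applying $\iota_R^*$ on the left and using $\iota_R^*\iota_R = I_{R_\#}$ (which is a consequence of $\iota_R$ being an isometric embedding) gives
\[
\iota_R^*\,\av(M)\,\iota_R = \iota_R^*\iota_R\,\av(M)|_{R_\#} = \av(M)|_{R_\#},
\]
while the right-hand side of the lemma similarly reduces to $\av(M)\,\iota_R^*\iota_R = \av(M)|_{R_\#}$, where $\av(M)$ is identified with its restriction to the invariant subspace $R_\#$. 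There is no genuine technical obstacle; the only point requiring care is the mild abuse of notation by which $\av(M)$ is used to denote both the operator on $[L^2(0,1)]^2$ and its restriction to $R_\#$.
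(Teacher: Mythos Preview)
Your proposal is correct and follows essentially the same approach as the paper, which dismisses the lemma in one line as ``a straightforward consequence of the definition of $\av(M)$''; you have simply spelled out the key observation (that the constant matrix $\av(M)$ leaves $R_\#$ invariant) and the ensuing identification that the paper leaves implicit.
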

\begin{proof} The assertion is a straightforward consequence of the definition of $\av(M)$.
\end{proof}

\begin{proposition}\label{t:avf} Suppose that $\gamma>0$, $M\in \mathcal{M}_\gamma$ with $\av(M)=M$. Then one has
\[
   \begin{pmatrix}\iota_{N}^* \\[0.4em] \iota_{R}^*\end{pmatrix} M \begin{pmatrix}\iota_{N} & \iota_{R}\end{pmatrix} = \begin{pmatrix}M & 0 \\[0.4em] 0 & M\end{pmatrix}.
\] 
\end{proposition}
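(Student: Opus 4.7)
The plan is to exploit the fact that the hypothesis $\av(M)=M$ forces $M$ to be a constant $(2{\times}2)$ matrix. Since $\av(M) \in \mathbb{C}^{2\times 2}$ and the equality in $\mathcal{M}_\gamma$ is pointwise a.e., the assumption means $M(x)=\av(M)$ for almost every $x\in\mathbb{R}$. Thus, both as a matrix-valued function and as the associated multiplication operator on $[L^2(0,1)]^2$, $M$ acts as a single constant matrix.

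With that observation in hand, the two subspaces $N_\#=\mathbb{C}^2$ and $R_\#=[R(\partial_\#)]^2$ are both invariant under $M$. Invariance of $N_\#$ is immediate, since a constant matrix maps a constant vector to a constant vector. Invariance of $R_\#$ follows because $(f,g)\in R_\#$ means $\int_0^1 f=\int_0^1 g=0$, and then
\[
\int_0^1 M\begin{pmatrix}f\\ g\end{pmatrix} = M\int_0^1 \begin{pmatrix}f\\ g\end{pmatrix} =0,
\]
so $M(f,g)^\top\in R_\#$. Because $N_\#$ and $R_\#$ are orthogonal complements in $[L^2(0,1)]^2$, this invariance forces the two off-diagonal blocks to vanish:
\[
\iota_N^* M \iota_R=0, \qquad \iota_R^* M \iota_N=0.
\]

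For the diagonal entries, the $N_\#$-block acts on $(\alpha,\beta)^\top\in\mathbb{C}^2$ by mapping it to the constant function $M(\alpha,\beta)^\top$ and then projecting back, so $\iota_N^* M \iota_N = M$ on $N_\#$. The $R_\#$-block is handled directly by Lemma~\ref{l:avdia} together with $\iota_R^*\iota_R=I_{R_\#}$:
\[
\iota_R^* M \iota_R = \iota_R^* \av(M) \iota_R = \av(M)\,\iota_R^*\iota_R = M.
\]
Assembling the four blocks yields the claimed block-diagonal identity.

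There is no substantive obstacle here; the proposition is essentially a bookkeeping consequence of constancy of $M$ together with Lemma~\ref{l:avdia}. The only delicate point is the interpretation of the averaging hypothesis as pointwise constancy, after which the orthogonality of $N_\#$ and $R_\#$ does all the work.
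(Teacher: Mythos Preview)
Your proof is correct and follows essentially the same route as the paper, which cites Theorem~\ref{t:av} and Lemma~\ref{l:avdia}; you simply unpack the constancy of $M$ more explicitly, handling the $N_\#$-block by direct inspection rather than via Theorem~\ref{t:av} and spelling out why the off-diagonal blocks vanish. The paper's one-line proof leaves the off-diagonal vanishing implicit in the constancy of $\av(M)$, so your version is a faithful (and slightly more detailed) rendering of the same argument.
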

\begin{proof}
 The assertion is a consequence of Theorem \ref{t:av} and Lemma \ref{l:avdia}.
\end{proof}

\section{Proof of Theorem \ref{t:mt}}
\label{proof_thm}

In this section we prove Theorem \ref{t:mt} and, therefore, we assume throughout the hypotheses of Theorem \ref{t:mt}, namely $\gamma>0$, $M\in \mathcal{M}_\gamma$. 

We require the following elementary result on the inverse of an operator in terms of the inverse of the Schur complements of an invertible operator on a subspace.

\begin{lemma}\label{l:s} Let $H_1,H_2$ be Hilbert spaces, $X\in {\mathbb L}(H_1)$, $Y\colon H_1\supseteq D(Y)\to H_2$, $\gamma_{12}\in {\mathbb L}(H_2,H_1)$, $\gamma_{21}\in {\mathbb L}(H_1,H_2)$. Assume that $0\in \rho\bigl(X-\gamma_{12}Y^{-1}\gamma_{21}\bigr)\cap \rho(Y)$. Then the operator 
\[
   \begin{pmatrix}
     X & \gamma_{12} \\[0.4em] \gamma_{21} & Y
   \end{pmatrix}
\]
is continuously invertible in ${\mathbb L}(H_1\oplus H_2)$ and for its inverse the following formula holds: 
\[
  \begin{pmatrix}
     X & \gamma_{12} \\[0.4em] \gamma_{21} & Y
   \end{pmatrix}^{-1} = \begin{pmatrix}
     I & 0 \\[0.4em] -Y^{-1}\gamma_{21} & I
   \end{pmatrix} \begin{pmatrix}
    \bigl (X-\gamma_{12}Y^{-1}\gamma_{21}\bigr)^{-1} & 0 \\[0.4em] 0 & Y^{-1}
   \end{pmatrix} \begin{pmatrix}
     I & -\gamma_{12}Y^{-1} \\[0.4em] 0 & I
   \end{pmatrix},
\]
where $I$ in the diagonal entries denotes the identity operators in the appropriate spaces.
\end{lemma}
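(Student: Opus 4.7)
The plan is to verify this by establishing the block LDU factorisation
\[
\begin{pmatrix} X & \gamma_{12} \\[0.3em] \gamma_{21} & Y \end{pmatrix} = \begin{pmatrix} I & \gamma_{12}Y^{-1} \\[0.3em] 0 & I \end{pmatrix}\begin{pmatrix} X - \gamma_{12}Y^{-1}\gamma_{21} & 0 \\[0.3em] 0 & Y \end{pmatrix}\begin{pmatrix} I & 0 \\[0.3em] Y^{-1}\gamma_{21} & I \end{pmatrix}
\]
on the natural domain $H_1 \oplus D(Y)$, and then to invert each of the three factors. The outer factors are unipotent block-triangular with bounded off-diagonal entries, so their inverses are obtained simply by flipping signs. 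The middle (block-diagonal) factor is inverted block-wise by hypothesis, since $0 \in \rho(X - \gamma_{12}Y^{-1}\gamma_{21}) \cap \rho(Y)$. Multiplying the three inverses in reverse order (with the bottom-triangular inverse now on the left) immediately reproduces the stated formula.

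First I would check that all the intermediate operators make sense. Since $0\in\rho(Y)$, the inverse $Y^{-1}\in{\mathbb L}(H_2)$ is bounded with range $D(Y)$, so $Y^{-1}\gamma_{21}\in{\mathbb L}(H_1,H_2)$ and $\gamma_{12}Y^{-1}\in{\mathbb L}(H_2,H_1)$ are both bounded. In particular, each of the three factors in the factorisation above maps $H_1 \oplus D(Y)$ into itself (for the diagonal factor this uses that $Y$ maps $D(Y)$ into $H_2$ and that the first row is bounded). Then a direct multiplication on an arbitrary element $(h_1,h_2)\in H_1 \oplus D(Y)$ yields $\bigl(Xh_1 + \gamma_{12}h_2,\,\gamma_{21}h_1 + Yh_2\bigr)$, confirming the factorisation.

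Next I would verify the claimed expression for the inverse by direct composition, going right to left: apply $\begin{pmatrix} I & -\gamma_{12}Y^{-1} \\ 0 & I \end{pmatrix}$, then the block-diagonal operator, then the lower-triangular factor, to an arbitrary $(k_1,k_2) \in H_1 \oplus H_2$. The range of the second component lies in $D(Y)$ thanks to the presence of $Y^{-1}$, so the result lies in the domain of the block operator; composing with the original block matrix (or equivalently, with the three factors of the LDU decomposition in the opposite order) then collapses telescopically to $(k_1,k_2)$. Continuous invertibility and the norm bound on $H_1 \oplus H_2$ follow because each factor on the right-hand side is a bounded operator.

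The computation is essentially elementary, so there is no substantive obstacle; the only mild point of care is that $Y$ is not assumed to be bounded, which forces one to track the domain $D(Y)$ through the factorisation and to exploit the bounded invertibility of $Y$ to guarantee that $Y^{-1}\gamma_{21}$ and $\gamma_{12}Y^{-1}$ are everywhere defined and bounded. Once this bookkeeping is in place, the Schur-complement identity is a standard one-line verification.
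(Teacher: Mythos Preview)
Your proposal is correct and is precisely the ``direct computation'' the paper alludes to: the block LDU factorisation with Schur complement, followed by inversion of the three factors. Your additional care in tracking the domain $D(Y)$ for the unbounded $Y$ is appropriate bookkeeping that the paper leaves implicit.
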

\begin{proof}
 The proof is obtained by direct computation.
\end{proof}

The above lemma together with Theorem \ref{t:nsp} yields a representation for the operator $(\epsilon M - A_{\epsilon\theta})^{-1}$. For brevity, we introduce the operators ({\it cf.} (\ref{star0}))
\[
 A_{\tau,{\rm r}}\coloneqq \begin{pmatrix} 0 & \partial_{\#,{\rm r}}+{\rm i}\tau \\[0.4em]   \partial_{\#,{\rm r}}+{\rm i}\tau & 0\end{pmatrix},\quad\quad
 B_{\tau}\coloneqq \begin{pmatrix} 0 & {\rm i}\tau\\[0.4em] {\rm i}\tau& 0\end{pmatrix},\quad\quad Y_\tau\coloneqq \epsilon \iota_R^*M\iota_R- A_{\tau, {\rm r}},\quad\quad\tau\in Q',
\]
\[
\Gamma_{11}\coloneqq \iota_N^* M \iota_N,\ \ \ \Gamma_{12}\coloneqq \iota_N^*M\iota_R,\ \ \ \Gamma_{21}\coloneqq \iota_R^*M\iota_N,\ \ \ \Gamma_{22}\coloneqq \iota_R^*M\iota_R,\ \ \ 
\]
and also use the following notation:
\[
\epsilon_0:=\frac{\pi}{2\|M\|},\quad\quad\epsilon_1:=\frac{\gamma\pi}{4\|M\|^2},\quad\quad\epsilon':=\min\{\epsilon_0,\epsilon_1\}.
\]

\begin{theorem}\label{t:rinv} 
For all $\epsilon\in (0,\epsilon')$ one has
\begin{multline*}
   \begin{pmatrix} \iota_N^*\\[0.4em] \iota_R^*\end{pmatrix}(\epsilon M - A_{\epsilon\theta})^{-1}\begin{pmatrix} \iota_N & \iota_R\end{pmatrix} \\=
   \begin{pmatrix} I & 0 \\[0.4em] - \epsilon Y_{\varepsilon\theta}^{-1}\Gamma_{21} & I\end{pmatrix}
   \begin{pmatrix}\bigl(\epsilon \Gamma_{11} - B_{\epsilon\theta} -\epsilon^2\Gamma_{12}Y_{\varepsilon\theta}^{-1}\Gamma_{21}\bigr)^{-1} & 0 \\[0.4em] 0 & Y_{\varepsilon\theta}^{-1}\end{pmatrix}
   \begin{pmatrix} I & - \epsilon\Gamma_{12}Y_{\varepsilon\theta}^{-1} \\[0.4em] 0 & I\end{pmatrix},\quad\quad\theta\in\varepsilon^{-1}Q'.
\end{multline*}
Furthermore, the following bound holds:
\begin{equation}
  \sup_{{\epsilon\in (0,\epsilon'),}\atop{\theta\in \epsilon^{-1}[-\pi,\pi)}}\bigl\|Y_{\varepsilon\theta}^{-1}\bigr\|_{{\mathbb L}(R_\#)}\leq \frac{2}{\pi}.
  \label{bound_Y_inverse}
\end{equation}
\end{theorem}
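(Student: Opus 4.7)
The plan is to combine the block decomposition of $A_{\epsilon\theta}$ from Theorem \ref{t:nsp} with the Schur-complement inversion formula of Lemma \ref{l:s}, after verifying that the relevant Schur complement and the $R_\#$-component block $Y_{\epsilon\theta}$ are both invertible for $\epsilon\in(0,\epsilon').$ Indeed, combining Theorem \ref{t:nsp} with the definitions of $\Gamma_{11},\dots,\Gamma_{22}$ one immediately reads off
\[
  \begin{pmatrix}\iota_N^* \\ \iota_R^*\end{pmatrix}\bigl(\epsilon M - A_{\epsilon\theta}\bigr)\begin{pmatrix}\iota_N & \iota_R\end{pmatrix}
  = \begin{pmatrix} \epsilon\Gamma_{11} - B_{\epsilon\theta} & \epsilon\Gamma_{12} \\[0.2em] \epsilon\Gamma_{21} & Y_{\epsilon\theta} \end{pmatrix},
\]
to which Lemma \ref{l:s} applied with $X=\epsilon\Gamma_{11}-B_{\epsilon\theta},$ $\gamma_{12}=\epsilon\Gamma_{12},$ $\gamma_{21}=\epsilon\Gamma_{21},$ $Y=Y_{\epsilon\theta}$ directly yields the stated factorisation, since $\begin{pmatrix}\iota_N & \iota_R\end{pmatrix}$ is a unitary identification of $N_\#\oplus R_\#$ with $[L^2(0,1)]^2.$

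For the bound \eqref{bound_Y_inverse} on $Y_{\epsilon\theta}^{-1},$ I would first appeal to Remark \ref{r:prt} to conclude that $\|A_{\tau,{\rm r}}^{-1}\|_{\mathbb{L}(R_\#)}\le 1/\pi$ for every $\tau\in Q'$ (the inverse of the off-diagonal block operator is obtained by swapping diagonal entries and applying $(\partial_{\#,{\rm r}}+{\rm i}\tau)^{-1}$ componentwise). Writing
\[
  Y_\tau = -A_{\tau,{\rm r}}\bigl(I - \epsilon A_{\tau,{\rm r}}^{-1}\iota_R^* M \iota_R\bigr)
\]
and noting that $\|\epsilon A_{\tau,{\rm r}}^{-1}\iota_R^* M\iota_R\|\le \epsilon\|M\|/\pi < 1/2$ for $\epsilon<\epsilon_0,$ the second factor is invertible by Neumann series with inverse of norm at most $2;$ hence $Y_{\epsilon\theta}^{-1}$ exists with norm at most $2/\pi,$ uniformly in $\theta\in\epsilon^{-1}[-\pi,\pi).$

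It remains to verify that the Schur complement $S:=\epsilon\Gamma_{11} - B_{\epsilon\theta} - \epsilon^2\Gamma_{12}Y_{\epsilon\theta}^{-1}\Gamma_{21}$ on $N_\#\cong\mathbb{C}^2$ is invertible, which I would do by a coercivity argument. Since $B_{\epsilon\theta}$ is skew-adjoint, $\Re(-B_{\epsilon\theta})=0;$ since $\iota_N$ is isometric and $\Re M\ge\gamma I$ a.e., $\Re(\Gamma_{11})\ge \gamma I.$ With $\|\Gamma_{12}\|,\|\Gamma_{21}\|\le\|M\|$ and the bound $\|Y_{\epsilon\theta}^{-1}\|\le 2/\pi$ just obtained, the correction term satisfies $\|\epsilon^2\Gamma_{12}Y_{\epsilon\theta}^{-1}\Gamma_{21}\|\le 2\epsilon^2\|M\|^2/\pi,$ which is strictly dominated by $\epsilon\gamma/2$ precisely when $\epsilon<\epsilon_1.$ Consequently $\Re(S)\ge (\epsilon\gamma/2) I,$ so $S$ is invertible, and Lemma \ref{l:s} delivers the formula. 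I anticipate no serious obstacle beyond bookkeeping; the only point needing care is that $A_{\tau,{\rm r}}$ is unbounded, which is circumvented by running the Neumann series for the bounded perturbation of $I$ rather than directly for $Y_\tau.$
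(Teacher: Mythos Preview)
Your proposal is correct and follows essentially the same route as the paper: the block decomposition from Theorem \ref{t:nsp} combined with Lemma \ref{l:s}, the Neumann-series argument for $Y_{\epsilon\theta}^{-1}$ via Remark \ref{r:prt} (this is exactly the paper's Theorem \ref{t:Y}), and an accretivity argument for the Schur complement (the paper's Lemma \ref{l:X}). The only cosmetic difference is that the paper establishes invertibility of the Schur complement by first bounding $(\Gamma_{11}-\epsilon^{-1}B_{\epsilon\theta})^{-1}$ via $\Re\Gamma_{11}\ge\gamma$ and then running a second Neumann series, whereas you bound $\Re S$ directly; both yield the same threshold $\epsilon_1$.
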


The proof of Theorem \ref{t:rinv} relies on Lemma \ref{l:s},  where we set $\gamma_{21}=\varepsilon \Gamma_{21},$ $\gamma_{12}=\varepsilon\Gamma_{12},$ $X=\varepsilon\Gamma_{11}-B_{\varepsilon\theta},$ $Y=Y_{\varepsilon\theta}.$ 
First, we address the invertibility of the operator  $Y_{\varepsilon\theta}=\epsilon \iota_R^*M\iota_R- A_{\epsilon\theta,{\rm r}}.$

\begin{theorem}\label{t:Y} 
The operator $Y$ is continuously invertible and the bound (\ref{bound_Y_inverse}) holds.
\end{theorem}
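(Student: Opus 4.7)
The plan is to prove Theorem~\ref{t:Y} by a Neumann series argument, treating the operator $\varepsilon \iota_R^* M \iota_R$ as a small perturbation of the (already invertible) operator $-A_{\varepsilon\theta,{\rm r}}$ on $R_\#$.

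\textbf{Step 1: Invertibility of $A_{\varepsilon\theta,{\rm r}}$ on $R_\#$.}
Since $\varepsilon\theta\in Q'=[-\pi,\pi)$ by hypothesis, Remark~\ref{r:prt} yields that $\partial_{\#,{\rm r}}+{\rm i}\varepsilon\theta$ is continuously invertible on $R(\partial_\#)$ with norm of the inverse bounded by $1/\pi$. Because
\[
   A_{\varepsilon\theta,{\rm r}} = \begin{pmatrix} 0 & \partial_{\#,{\rm r}}+{\rm i}\varepsilon\theta \\[0.3em] \partial_{\#,{\rm r}}+{\rm i}\varepsilon\theta & 0 \end{pmatrix}
\]
is block antidiagonal, its inverse is the corresponding block antidiagonal operator with entries $(\partial_{\#,{\rm r}}+{\rm i}\varepsilon\theta)^{-1}$, so $\|A_{\varepsilon\theta,{\rm r}}^{-1}\|_{{\mathbb L}(R_\#)}\le 1/\pi$, uniformly in $\theta\in\varepsilon^{-1}Q'$.

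\textbf{Step 2: Factorisation and Neumann series.}
Writing
\[
   Y_{\varepsilon\theta} = -A_{\varepsilon\theta,{\rm r}}\bigl(I - \varepsilon A_{\varepsilon\theta,{\rm r}}^{-1}\iota_R^* M \iota_R\bigr),
\]
we observe that $\|\iota_R^* M \iota_R\|_{{\mathbb L}(R_\#)}\le \|M\|$. Combining this with Step 1,
\[
   \bigl\|\varepsilon A_{\varepsilon\theta,{\rm r}}^{-1}\iota_R^* M \iota_R\bigr\|_{{\mathbb L}(R_\#)} \le \frac{\varepsilon\|M\|}{\pi}.
\]
For $\varepsilon<\varepsilon_0=\pi/(2\|M\|)$ this norm is strictly less than $1/2$, so the Neumann series converges and $I - \varepsilon A_{\varepsilon\theta,{\rm r}}^{-1}\iota_R^* M \iota_R$ is continuously invertible with inverse bounded in norm by $1/(1-1/2)=2$.

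\textbf{Step 3: Assembling the bound.}
From Steps 1 and 2 the operator $Y_{\varepsilon\theta}$ is continuously invertible on $R_\#$, with
\[
   \bigl\|Y_{\varepsilon\theta}^{-1}\bigr\|_{{\mathbb L}(R_\#)} \le \bigl\|(I - \varepsilon A_{\varepsilon\theta,{\rm r}}^{-1}\iota_R^* M \iota_R)^{-1}\bigr\|\cdot \bigl\|A_{\varepsilon\theta,{\rm r}}^{-1}\bigr\| \le 2\cdot\frac{1}{\pi}=\frac{2}{\pi},
\]
uniformly in $\varepsilon\in(0,\varepsilon_0)$ (and hence in $\varepsilon\in(0,\varepsilon')$ since $\varepsilon'\le\varepsilon_0$) and in $\theta\in\varepsilon^{-1}Q'$. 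This is exactly the claimed bound~\eqref{bound_Y_inverse}. There is no real obstacle: the argument is a routine Neumann-series perturbation, and the only ``book-keeping'' subtlety is noticing that the constraint $\varepsilon\theta\in Q'$ ensures that Remark~\ref{r:prt} applies uniformly in the quasimomentum, so that the bound $1/\pi$ on $\|A_{\varepsilon\theta,{\rm r}}^{-1}\|$ can be used throughout.
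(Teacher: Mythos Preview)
Your proof is correct and follows essentially the same approach as the paper: both factor $Y_{\varepsilon\theta}$ through $A_{\varepsilon\theta,{\rm r}}$, invoke Remark~\ref{r:prt} for the uniform bound $\|A_{\varepsilon\theta,{\rm r}}^{-1}\|\le 1/\pi$, and then run a Neumann series argument using $\varepsilon<\varepsilon_0=\pi/(2\|M\|)$ to obtain the bound $2/\pi$. The only cosmetic difference is that the paper writes the factorisation as $A_{\varepsilon\theta,{\rm r}}(\varepsilon A_{\varepsilon\theta,{\rm r}}^{-1}\iota_R^*M\iota_R-1)$ rather than pulling out a minus sign.
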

\begin{proof}
 By Remark \ref{r:prt}, the operator $A_{\tau, {\rm r}}$
 is continuously invertible for every $\tau\in Q'$. More precisely, we obtain
 \[
  A_{\tau, {\rm r}}^{-1}= (\partial_{\#,{\rm r}}+{\rm i}\tau)^{-1}\begin{pmatrix} 0 & 1 \\[0.3em] 1 & 0 \end{pmatrix},\quad\quad\quad
  \bigl\|A_{\tau, {\rm r}}^{-1}
    \bigr\|_{{\mathbb L}(R_\#)}\leq \frac{1}{\pi},\quad\quad\quad\tau\in Q'.
 \]
Furthermore, for $\epsilon\in(0,\epsilon_0)$, $\theta\in \epsilon^{-1}[-\pi,\pi),$ by a Neumann series argument (see {\it e.g.} \cite[pp. 30--34]{Kato}), the operator
\[
Y_{\varepsilon\theta}=A_{\epsilon\theta,{\rm r}}\bigl(\epsilon A_{\epsilon\theta,{\rm r}}^{-1}\iota_R^*M \iota_R-1\bigr)
\]
 is a composition of continuously invertible operators, and 
 \begin{align*}
   \bigl\|Y_{\varepsilon\theta}^{-1}\bigr\|_{{\mathbb L}(R_\#)}
   & 
   =\bigl\|(\epsilon A_{\epsilon\theta,{\rm r}}^{-1}\iota_R^*M \iota_R-1)^{-1}A_{\epsilon\theta,{\rm r}}^{-1}\bigr\|_{{\mathbb L}(R_\#)} 
   \\&    
   \leq \frac{1}{\pi}\sum_{k=0}^\infty\bigl\|\epsilon A_{\epsilon\theta,{\rm r}}^{-1}\iota_R^*M \iota_R\bigr\|_{{\mathbb L}(R_\#)}^k
   \\
  & \leq \frac{1}{\pi}\sum_{k=0}^\infty \bigg(\epsilon_0 \frac{\|M\|}{\pi}\bigg)^k
    \leq \frac{1}{\pi}\sum_{k=0}^\infty \bigg(\frac{1}{2}\bigg)^k=\frac{2}{\pi} ,
 \end{align*}
which yields the claim.
\end{proof}
 
 Next, we discuss the invertibility of the term $\epsilon \Gamma_{11}- B_{\epsilon\theta} -\epsilon^2\Gamma_{12}Y_{\varepsilon\theta}^{-1}\Gamma_{21}$ in Theorem \ref{t:rinv}.
\begin{lemma}\label{l:X} 
For all $\epsilon\in(0, \epsilon'),$ $\theta\in\varepsilon^{-1}Q',$ the operator
\[
X_{\varepsilon\theta}\coloneqq  \Gamma_{11}-\epsilon^{-1}B_{\epsilon\theta} -\epsilon\Gamma_{12}Y_{\varepsilon\theta}^{-1}\Gamma_{21}
\]  
is continuously invertible, and the following bound holds:
\[
   \sup_{{\epsilon\in(0, \epsilon'),}
   \atop{\theta\in \epsilon^{-1}[-\pi,\pi)}}\bigl\|X_{\varepsilon\theta}^{-1}\bigr\|_{{\mathbb L}(N_\#)}\leq \frac{2}{\gamma}.
\] 
\end{lemma}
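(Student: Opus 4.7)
The plan is to establish continuous invertibility of $X_{\varepsilon\theta}$ by showing that it is uniformly accretive on $N_\#$, namely $\Re X_{\varepsilon\theta} \geq (\gamma/2)I_{N_\#}$ for all $\epsilon\in(0,\epsilon')$ and $\theta\in\varepsilon^{-1}Q'$. From this the bound $\|X_{\varepsilon\theta}^{-1}\|_{{\mathbb L}(N_\#)}\leq 2/\gamma$ follows by the standard estimate for m-accretive operators: for $u\in N_\#$ one has $\|u\|\,\|X_{\varepsilon\theta}u\|\geq|\langle X_{\varepsilon\theta}u,u\rangle|\geq\Re\langle X_{\varepsilon\theta}u,u\rangle\geq(\gamma/2)\|u\|^{2}$, yielding injectivity with closed range, and the same bound applied to $X_{\varepsilon\theta}^{*}$ furnishes surjectivity.

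I would first analyse the unperturbed piece $\Gamma_{11}-\epsilon^{-1}B_{\varepsilon\theta}$. Since $\iota_{\textnormal{n}}\colon N(\partial_\#)\hookrightarrow L^{2}(0,1)$ is an isometry, $\iota_{N}^{*}\iota_{N}=I_{N_\#}$, so Theorem \ref{t:av} gives $\Gamma_{11}=\iota_{N}^{*}M\iota_{N}=\av(M)$ as an operator on $N_\#\cong\mathbb{C}^{2}$. By Lemma \ref{l:avinv} we have $\av(M)\in\mathcal{M}_\gamma$, hence $\Re\Gamma_{11}\geq\gamma I_{N_\#}$. On the other hand, $B_{\varepsilon\theta}={\rm i}\varepsilon\theta\left(\begin{smallmatrix}0&1\\1&0\end{smallmatrix}\right)$ is skew-selfadjoint, so $\Re(\epsilon^{-1}B_{\varepsilon\theta})=0$, and consequently
\[
\Re\bigl(\Gamma_{11}-\epsilon^{-1}B_{\varepsilon\theta}\bigr)\geq\gamma\,I_{N_\#}.
\]

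Next I would control the Schur-complement correction $\epsilon\Gamma_{12}Y_{\varepsilon\theta}^{-1}\Gamma_{21}$. Because $\iota_{N}$ and $\iota_{R}$ are contractions, one has $\|\Gamma_{12}\|,\|\Gamma_{21}\|\leq\|M\|$; combined with the bound $\|Y_{\varepsilon\theta}^{-1}\|_{{\mathbb L}(R_\#)}\leq 2/\pi$ obtained in Theorem \ref{t:Y}, this yields
\[
\bigl\|\epsilon\Gamma_{12}Y_{\varepsilon\theta}^{-1}\Gamma_{21}\bigr\|_{{\mathbb L}(N_\#)}\leq\frac{2\epsilon\|M\|^{2}}{\pi}\leq\frac{\gamma}{2},\quad\epsilon\in(0,\epsilon_{1}),
\]
where the second inequality is exactly the defining choice $\epsilon_{1}=\gamma\pi/(4\|M\|^{2})$. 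Since $\epsilon'\leq\epsilon_{1}$ and $\|\Re T\|\leq\|T\|$, this perturbation shifts the real part by at most $\gamma/2$ in the negative direction, uniformly in $\theta\in\epsilon^{-1}Q'$. Combining the two contributions gives $\Re X_{\varepsilon\theta}\geq(\gamma/2)I_{N_\#}$ as required.

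There is essentially no obstacle: the argument is transparent once one recognises (i) that $\iota_{N}^{*}\iota_{N}=I$ reduces $\Gamma_{11}$ to $\av(M)$ via Theorem \ref{t:av}, and (ii) that the large-looking term $\epsilon^{-1}B_{\varepsilon\theta}$ is harmless for accretivity because it is skew-selfadjoint. The smallness restriction $\epsilon<\epsilon_{1}$ is forced solely by the perturbative Schur term; the restriction $\epsilon<\epsilon_{0}$ has already been consumed in Theorem \ref{t:Y} to produce the estimate $\|Y_{\varepsilon\theta}^{-1}\|\leq 2/\pi$ used above. Taking $\epsilon<\epsilon'=\min\{\epsilon_{0},\epsilon_{1}\}$ secures both.
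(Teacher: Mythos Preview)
Your proof is correct and uses the same essential ingredients as the paper's argument: the accretivity $\Re\Gamma_{11}\geq\gamma$, the skew-selfadjointness of $B_{\varepsilon\theta}$, the bound $\|Y_{\varepsilon\theta}^{-1}\|\leq 2/\pi$ from Theorem \ref{t:Y}, and the choice $\epsilon_1=\gamma\pi/(4\|M\|^2)$ to make the Schur correction small.

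The only difference is in how these ingredients are assembled. The paper first establishes $\bigl\|(\Gamma_{11}-\epsilon^{-1}B_{\epsilon\theta})^{-1}\bigr\|\leq 1/\gamma$ from the accretivity, then treats $\epsilon\Gamma_{12}Y_{\varepsilon\theta}^{-1}\Gamma_{21}$ as a perturbation and invokes a Neumann series (mirroring the proof of Theorem \ref{t:Y}) to obtain $\|X_{\varepsilon\theta}^{-1}\|\leq 2/\gamma$. You instead bound the real part of the full operator $X_{\varepsilon\theta}$ in one step, obtaining $\Re X_{\varepsilon\theta}\geq\gamma/2$ directly. Your route is marginally more direct---it avoids the Neumann series machinery---while the paper's route has the aesthetic advantage of reusing the exact template of Theorem \ref{t:Y}. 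The resulting constant $2/\gamma$ and the threshold $\epsilon'=\min\{\epsilon_0,\epsilon_1\}$ coincide in both approaches.
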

\begin{proof}
 By Theorem \ref{t:Y}, the operator norm of  $Y_{\varepsilon\theta}^{-1}$ is bounded uniformly in $\epsilon\in(0,\epsilon_0)$
  and $\theta\in \epsilon^{-1}[-\pi,\pi)$. Furthermore, notice that $\Re(\Gamma_{11}-\epsilon^{-1}B_{\epsilon\theta})=\Re \Gamma_{11}\geq \gamma$. Hence, 
  \begin{equation}
  \bigl\|(\Gamma_{11}-\epsilon^{-1}B_{\epsilon\theta})^{-1}\bigr\|_{{\mathbb L}(N_\#)}\leq \frac{1}{\gamma},
  \label{newstar}
  \end{equation}
   which yields the applicability of a Neumann series argument for the invertibility of $X_{\varepsilon\theta}.$ Taking into account the fact that $\|Y_{\varepsilon\theta}^{-1}\|\leq 2/\pi$ and arguing in a similar way to the proof of Theorem \ref{t:Y}, we obtain
  the invertibility of $X_{\varepsilon\theta}$ as well as the bound $\bigl\|X_{\varepsilon\theta}^{-1}\bigr\|\leq 2/\gamma$ for all $\epsilon\in (0, \epsilon').$ 
\end{proof}

\begin{proof}[Proof of Theorem \ref{t:rinv}] The assertion follows by combining Theorem \ref{t:Y}, Lemma \ref{l:X} and Theorem \ref{t:nsp}. 
\end{proof}

\begin{proof}[Proof of Theorem \ref{t:mt}] We apply Theorem \ref{t:rinv} to $M\in \mathcal{M}_\gamma$
and consider the difference 
\[
   D_\tau\coloneqq \begin{pmatrix} \iota_N^*\\[0.4em] \iota_R^*\end{pmatrix}\bigl(\epsilon M - A_\tau\bigr)^{-1}\begin{pmatrix} \iota_N & \iota_R\end{pmatrix} - \begin{pmatrix} \iota_N^*\\[0.4em] \iota_R^*\end{pmatrix}\bigl(\epsilon \av(M) - A_\tau\bigr)^{-1}\begin{pmatrix} \iota_N & \iota_R\end{pmatrix},\quad\quad\tau\in Q'.
\]
First, we show that 
\begin{equation}\label{eq:aim}
\sup_{{\epsilon\in (0,\epsilon'),}\atop{\theta\in \epsilon^{-1}[-\pi,\pi)}} \|D_{\epsilon\theta}\|_{{\mathbb L}([L^2(0,1)]^2)}\leq \max\biggl\{\frac{4\gamma^2}{\pi}\|M\|^2,\ \frac{2}{\pi}\left(\frac{2\|M\|}{\gamma}+1\right)\biggr\} \eqqcolon K(\|M\|).
\end{equation}

Taking Proposition \ref{t:avf} into account, we infer from Theorem \ref{t:rinv} that
 \[
    \begin{pmatrix} \iota_N^*\\[0.4em] \iota_R^*\end{pmatrix}\bigl(\epsilon \av(M) - A_{\epsilon\theta}\bigr)^{-1}\begin{pmatrix} \iota_N & \iota_R\end{pmatrix}
    =   \begin{pmatrix}\bigl(\epsilon \Gamma_{11}-B_{\epsilon\theta}\bigr)^{-1} & 0 \\[0.4em] 0 & {\widetilde Y}_{\varepsilon\theta}^{-1}\end{pmatrix},
 \]
where $\widetilde Y: = \epsilon \iota_R^*\av(M)\iota_R- A_{\epsilon\theta,{\rm r}}$. Hence, Theorem \ref{t:rinv} yields
\begin{align*}
   D_{\epsilon\theta}
    & = 
   \begin{pmatrix} 1 & 0 \\[0.4em] - \epsilon Y_{\varepsilon\theta}^{-1}\Gamma_{21} & 1\end{pmatrix}
   \begin{pmatrix}\bigl(\epsilon \Gamma_{11} - B_{\epsilon\theta} -\epsilon^2\Gamma_{12}Y_{\varepsilon\theta}^{-1}\Gamma_{21}\bigr)^{-1} & 0 \\[0.4em] 0 & Y_{\varepsilon\theta}^{-1}\end{pmatrix}
   \begin{pmatrix} 1 & - \epsilon\Gamma_{12}Y_{\varepsilon\theta}^{-1} \\[0.4em] 0 & 1\end{pmatrix}
    \\ &
    \quad  \quad \quad  \quad    \quad  \quad
    - \begin{pmatrix} (\epsilon \Gamma_{11}-B_{\epsilon\theta})^{-1} & 0 \\[0.4em] 0 & {\widetilde Y}_{\varepsilon\theta}^{-1}\end{pmatrix}
    \\ & = \begin{pmatrix} 1 & 0 \\[0.4em] - \epsilon Y_{\varepsilon\theta}^{-1}\Gamma_{21} & 1\end{pmatrix}
    \Bigg\{\begin{pmatrix}\bigl(\epsilon \Gamma_{11} - B_{\epsilon\theta} -\epsilon^2\Gamma_{12}Y_{\varepsilon\theta}^{-1}\Gamma_{21}\bigr)^{-1} & 0 \\[0.4em] 0 & Y_{\varepsilon\theta}^{-1}\end{pmatrix} \\ & \quad\quad\quad\quad\quad\quad - \begin{pmatrix} 1 & 0 \\[0.4em]  \epsilon Y_{\varepsilon\theta}^{-1}\Gamma_{21} & 1\end{pmatrix}\begin{pmatrix} (\epsilon \Gamma_{11}-B_{\epsilon\theta})^{-1} & 0 \\[0.4em] 0 & {\widetilde Y}_{\varepsilon\theta}^{-1}\end{pmatrix}\begin{pmatrix} 1 &  \epsilon\Gamma_{12}Y_{\varepsilon\theta}^{-1} \\[0.4em] 0 & 1\end{pmatrix} \Bigg\}
    \begin{pmatrix} 1 & - \epsilon\Gamma_{12}Y_{\varepsilon\theta}^{-1} \\[0.4em] 0 & 1\end{pmatrix}.
\end{align*}
 By Theorem \ref{t:Y}, for all $\epsilon\in (0,\epsilon'),\theta\in \epsilon^{-1}Q'$ we have $\Big\|\Big(\begin{smallmatrix} 1 &  -\epsilon\Gamma_{12}Y_{\varepsilon\theta}^{-1} \\ 0 & 1\end{smallmatrix}\Big)\Big\|\leq 1+2\epsilon\|M\|/\pi\leq 2$ and, hence,
 \begin{multline*}
     \|D_{\epsilon\theta}\|_{{\mathbb L}([L^2(0,1)]^2)}\leq 4\Bigg\| \begin{pmatrix}\bigl(\epsilon \Gamma_{11} - B_{\epsilon\theta} -\epsilon^2\Gamma_{12}Y_{\varepsilon\theta}^{-1}\Gamma_{21}\bigr)^{-1} & 0 \\[0.4em] 0 & Y_{\varepsilon\theta}^{-1}\end{pmatrix} \\  - \begin{pmatrix} 1 & 0 \\[0.4em]  \epsilon Y_{\varepsilon\theta}^{-1}\Gamma_{21} & 1\end{pmatrix}\begin{pmatrix}\bigl(\epsilon \Gamma_{11}-B_{\epsilon\theta}\bigr)^{-1} & 0 \\[0.4em] 0 & {\widetilde Y}_{\varepsilon\theta}^{-1}\end{pmatrix}\begin{pmatrix} 1 &  \epsilon\Gamma_{12}Y_{\varepsilon\theta}^{-1} \\[0.4em] 0 & 1\end{pmatrix} \Bigg\|_{{\mathbb L}([L^2(0,1)]^2)}.
 \end{multline*}
 Further, noting that
 \begin{align*}
   &\begin{pmatrix} 1 & 0 \\[0.4em]  \epsilon Y_{\varepsilon\theta}^{-1}\Gamma_{21} & 1\end{pmatrix}\begin{pmatrix} (\epsilon \Gamma_{11}-B_{\epsilon\theta})^{-1} & 0 \\[0.4em] 0 & {\widetilde Y}_{\varepsilon\theta}^{-1}\end{pmatrix}\begin{pmatrix} 1 &  \epsilon\Gamma_{12}Y_{\varepsilon\theta}^{-1} \\[0.4em] 0 & 1\end{pmatrix} \\[0.5em]
   &=\begin{pmatrix} (\epsilon \Gamma_{11}-B_{\epsilon\theta})^{-1} & 0 \\[0.4em] \epsilon Y_{\varepsilon\theta}^{-1}\Gamma_{21}(\epsilon \Gamma_{11}-B_{\epsilon\theta})^{-1} & {\widetilde Y}_{\varepsilon\theta}^{-1}\end{pmatrix}\begin{pmatrix} 1 &  \epsilon\Gamma_{12}Y_{\varepsilon\theta}^{-1} \\[0.4em] 0 & 1\end{pmatrix} \\[0.5em]
   &=\begin{pmatrix} (\epsilon \Gamma_{11}-B_{\epsilon\theta})^{-1} & (\epsilon \Gamma_{11}-B_{\epsilon\theta})^{-1}\epsilon\Gamma_{12}Y_{\varepsilon\theta}^{-1} \\[0.4em] \epsilon Y_{\varepsilon\theta}^{-1}\Gamma_{21}(\epsilon \Gamma_{11}-B_{\epsilon\theta})^{-1} & {\widetilde Y}_{\varepsilon\theta}^{-1}+\epsilon Y_{\varepsilon\theta}^{-1}\Gamma_{21}(\epsilon \Gamma_{11}-B_{\epsilon\theta})^{-1}\epsilon\Gamma_{12}Y_{\varepsilon\theta}^{-1} \end{pmatrix} \\[0.5em]
   &= \begin{pmatrix} (\epsilon \Gamma_{11}-B_{\epsilon\theta})^{-1} &\bigl( \Gamma_{11}-\epsilon^{-1}B_{\epsilon\theta}\bigr)^{-1}\Gamma_{12}Y_{\varepsilon\theta}^{-1} \\[0.4em]  Y_{\varepsilon\theta}^{-1}\Gamma_{21}\bigl(\Gamma_{11}-\epsilon^{-1}B_{\epsilon\theta}\bigr)^{-1} & {\widetilde Y}_{\varepsilon\theta}^{-1}+\epsilon Y_{\varepsilon\theta}^{-1}\Gamma_{21}\bigl(\Gamma_{11}-\epsilon^{-1}B_{\epsilon\theta}\bigr)^{-1}\Gamma_{12}Y_{\varepsilon\theta}^{-1}\end{pmatrix},
 \end{align*}
we estimate
 \begin{multline*}
     \sup_{{\epsilon\in (0,\epsilon_1),}\atop{\theta\in \epsilon^{-1}[-\pi,\pi)}} \Bigg\| \begin{pmatrix} 0 & \bigl(\Gamma_{11}-\epsilon^{-1}B_{\epsilon\theta}\bigr)^{-1}\Gamma_{12}Y_{\varepsilon\theta}^{-1} \\[0.5em]  Y_{\varepsilon\theta}^{-1}\Gamma_{21}\bigl(\Gamma_{11}-\epsilon^{-1}B_{\epsilon\theta}\bigr)^{-1} & {\widetilde Y}_{\varepsilon\theta}^{-1}+\epsilon Y_{\varepsilon\theta}^{-1}\Gamma_{21}\bigl(\Gamma_{11}-\epsilon^{-1}B_{\epsilon\theta}\bigr)^{-1}\Gamma_{12}Y_{\varepsilon\theta}^{-1}\end{pmatrix}\Bigg\|_{{\mathbb L}([L^2(0,1)]^2)}
     \\ \leq\frac{2\|M\|}{\gamma\pi}+\frac{2}{\pi}+\frac{\epsilon}{\gamma}\left(\frac{2\|M\|}{\pi}\right)^2
      \leq \frac{2}{\pi}\left(\frac{2\|M\|}{\gamma}+1\right).
 \end{multline*}
 Indeed, the latter follows from the bound (\ref{newstar})
 as well as Theorem \ref{t:Y} (applied to both $Y_{\varepsilon\theta}$ and ${\widetilde Y}_{\varepsilon\theta}:$ recall Lemma \ref{l:avinv}, to deduce that $\av(M)\in \mathcal{M}_\gamma,$ and Theorem \ref{t:avf}). Hence, in order to obtain \eqref{eq:aim}, it remains to show that 
 \begin{equation}\label{e:faim}
    \sup_{{\epsilon\in (0,\epsilon'),}\atop{\theta\in \epsilon^{-1}[-\pi, \pi)}}\Bigl\|\bigl(\epsilon \Gamma_{11} - B_{\epsilon\theta} -\epsilon^2\Gamma_{12}Y_{\varepsilon\theta}^{-1}\Gamma_{21}\bigr)^{-1}-(\epsilon \Gamma_{11}-B_{\epsilon\theta})^{-1}\Bigr\|_{{\mathbb L}(N_\#)}\leq K\bigl(\|M\|\bigr).
 \end{equation} In view of the fact that
 \begin{align*}
    & (\epsilon \Gamma_{11} - B_{\epsilon\theta} -\epsilon^2\Gamma_{12}Y_{\varepsilon\theta}^{-1}\Gamma_{21})^{-1} 
 \\[0.35em]
 & =\Bigl(1- \bigl(\Gamma_{11} - \epsilon^{-1}B_{\epsilon\theta}\bigr)^{-1}\epsilon\Gamma_{12}Y_{\varepsilon\theta}^{-1}\Gamma_{21}\Bigr)^{-1}(\epsilon \Gamma_{11} - B_{\epsilon\theta})^{-1}
    \\ 
    & = \sum_{k=0}^\infty \Big((\Gamma_{11} - \epsilon^{-1}B_{\epsilon\theta})^{-1}\Gamma_{12}Y_{\varepsilon\theta}^{-1}\Gamma_{21}\epsilon\Big)^k(\epsilon \Gamma_{11} - B_{\epsilon\theta})^{-1} 
    \\ & = (\epsilon \Gamma_{11} - B_{\epsilon\theta})^{-1}+ \sum_{k=1}^\infty \Big(\bigl(\Gamma_{11} -\epsilon^{-1}B_{\epsilon\theta})^{-1}\Gamma_{12}Y_{\varepsilon\theta}^{-1}\Gamma_{21}\Big)^k\epsilon^{k-1}\bigl(\Gamma_{11} - \epsilon^{-1}B_{\epsilon\theta}\bigr)^{-1},
 \end{align*}
 the estimate \eqref{e:faim} follows. Indeed, one has
 \[
 \left\|\sum_{k=1}^\infty \Big(\bigl(\Gamma_{11} -\epsilon^{-1}B_{\epsilon\theta})^{-1}\Gamma_{12}Y_{\varepsilon\theta}^{-1}\Gamma_{21}\Big)^k\epsilon^{k-1}\bigl(\Gamma_{11} - \epsilon^{-1}B_{\epsilon\theta}\bigr)^{-1}\right\|_{{\mathbb L}(N_\#)}
 \quad\quad\quad\quad\quad\quad\quad\quad\quad\quad
\]
\[ 
\quad\quad\quad\quad\quad\quad\quad\quad\quad\quad
\leq\frac{1}{\gamma}\sum_{k=1}^\infty \Big(\frac{2}{\pi}\|M\|^2\Big)^k\epsilon^{k-1} \leq\frac{4\gamma^2}{\pi}\|M\|^2, 
\]
and, thus, also the estimate \eqref{eq:aim} follows. On the other hand, note that by the skew-selfadjointness of $A_\tau,$ 
$\tau\in Q',$ we infer that
\[
    \Bigl\| \big( M-\epsilon^{-1}A_\tau\big)^{-1} -\big(\av(M)-\epsilon^{-1}A_\tau\big)^{-1}\Bigr\|_{{\mathbb L}([L^2(0,1)]^2)}\leq \frac{2}{\gamma},
\]
for all $\tau\in Q.$
Thus, there exist $\kappa_1,\kappa_2\geq0$ independent of $\|M\|$ and $\epsilon$ such that
\[
     \Bigl\| \big( M-\epsilon^{-1}A_{\eps\theta}\big)^{-1} -\big(\av(M)-\epsilon^{-1}A_{\eps\theta}\big)^{-1}\Bigr\|_{{\mathbb L}([L^2(0,1)]^2)}\leq \begin{cases} \kappa_1(\|M\|^2+1),& 0<\epsilon\leq(\kappa_2 \|M\|^2)^{-1},\\[0.3em]
       2/\gamma,& \epsilon>0.\end{cases}
\]
Hence, we obtain the existence of some $\kappa\geq0$ independent of $\|M\|$ and $\epsilon$ such that 
\[
   \Bigl\| \big( M-\epsilon^{-1}A_{\eps\theta}\big)^{-1} -\big(\av(M)-\epsilon^{-1}A_{\eps\theta}\big)^{-1}\Bigr\|_{{\mathbb L}([L^2(0,1)]^2)}\leq \kappa\bigl(\|M\|^2+1\bigr)\epsilon.\qedhere
\]
\end{proof}

\section{Resolvent convergence of solutions to the viscoelasticity problem}
\label{viscoelasticity_treatment}

\subsection{Operator-norm resolvent estimates in ${\mathbb L}\bigl(H^1_\nu\bigl(\mathbb{R}; L^2({\mathbb R})\bigr), L_\nu^2\bigl(\mathbb{R}; L^2({\mathbb R})\bigr)\bigr)$}

Here we consider the problem of fractional elasticity ({\it cf.} (\ref{n_dependent}))
\begin{equation}
      \partial_t^2 u_\varepsilon- \partial_x \bigl(C(\cdot/\varepsilon) + \partial_t^\alpha D(\cdot/\varepsilon)\bigr)\partial_{x} u_\varepsilon=f,\ \ \ \ \ \varepsilon>0,\ \ \ f\in{L_\nu^2\bigl(\mathbb{R};L^2({\mathbb R})\bigr)},
\label{viscoelastic_eps}
\end{equation}
under the same assumptions on the coefficients $C,D$ and exponent $\alpha$ as in Corollary \ref{c:st}.
Using the Fourier--Laplace transform  ${\mathcal L}_\nu$ introduced in Section \ref{well_pos_section} and the fibre decomposition of Lemma \ref{l:mog} we first write an expression for the vector consisting of the solution to (\ref{viscoelastic_eps}) and its flux (whose role is played by the stress in the viscoelastic medium)
that allows us to apply Theorem \ref{t:mt} directly.  
Namely, for a given right-hand side $f$ in (\ref{viscoelastic_eps}),
denoting by 
\[
v_\varepsilon:=\partial_tu_\varepsilon, \quad\quad\quad\sigma_\varepsilon:=\bigl(C(\cdot/\varepsilon) + \partial_t^\alpha D(\cdot/\varepsilon)\bigr)\partial_{x} u_\varepsilon
\]
the velocity and viscoelastic stress at each point of the medium, we obtain, for all $\nu>\mu,$
\begin{equation}
\left(\begin{array}{c}v_\varepsilon \\ \sigma_\varepsilon\end{array}\right)=\Biggl(\mathcal{L}_\nu^*\Biggl(\overline{({\rm i}m+\nu)\begin{pmatrix}
      1 & 0 \\[0.3em] 0 & \bigl(C(\cdot/\varepsilon)+D(\cdot/\varepsilon)({\rm i}m+\nu)^\alpha\bigr)^{-1}
   \end{pmatrix} - \begin{pmatrix} 0 & \partial_x \\[0.3em]  \partial_x & 0
   \end{pmatrix}}\Biggr)\mathcal{L}_\nu\Biggr)^{-1}\left(\begin{array}{c}f \\ 0\end{array}\right)
   \label{bar1}
\end{equation}
\begin{equation}
=\mathcal{L}_\nu^*\,
\overline{({\rm i}m+\nu)\begin{pmatrix}
      1 & 0 \\[0.3em] 0 & \bigl(C(\cdot/\varepsilon)+D(\cdot/\varepsilon)({\rm i}m+\nu)^\alpha\bigr)^{-1}
   \end{pmatrix} - \begin{pmatrix} 0 & \partial_x \\[0.3em]  \partial_x & 0
   \end{pmatrix}}
   ^{-1}\mathcal{L}_\nu\left(\begin{array}{c}f \\ 0\end{array}\right)
\label{bar2}
\end{equation}
\[
\quad=\mathcal{L}_\nu^*{\mathcal G}_\varepsilon^*
 \int_{\epsilon^{-1}Q'}^\oplus\biggl[\overline{
 \Big(
 M_m^{\rm ve}(\cdot) - \varepsilon^{-1}A_{\varepsilon\theta}
 \Big)}
 ^{-1}-\overline{
 \Big(
 {\tt av}\bigl(M_m^{\rm ve}\bigr)- \varepsilon^{-1}A_{\varepsilon\theta}
 \Big)}
 ^{-1}\biggr]{\rm d}\theta\,{\mathcal G}_\varepsilon\mathcal{L}_\nu\left(\begin{array}{c}f \\ 0\end{array}\right)
\]
\[
\quad\quad\quad\quad\quad\quad\quad\quad\quad\quad\quad\quad+\mathcal{L}_\nu^*{\mathcal G}_\varepsilon^*
\int_{\epsilon^{-1}Q'}^\oplus \overline{
\Big(
{\tt av}\bigl(M_m^{\rm ve}\bigr) - \varepsilon^{-1}A_{\varepsilon\theta}
\Big)}
^{-1} {\rm d}\theta\,{\mathcal G}_\varepsilon\mathcal{L}_\nu\left(\begin{array}{c}f \\ 0\end{array}\right),
\]
where  
\[
M_\xi^{\rm ve}(\cdot):=({\rm i}\xi+\nu)\begin{pmatrix}
      1 & 0 \\[0.3em] 0 & \bigl(\widehat{C}(\cdot)+\widehat{D}(\cdot)({\rm i}\xi+\nu)^\alpha\bigr)^{-1}
   \end{pmatrix},\ \ \ \ \xi\in{\mathbb R},
\]
is the expression for the matrix $M$ in the general theory of Sections \ref{Gelfand_section}, \ref{averaging}, \ref{proof_thm}, $\mu\ge\nu_0$ such that (\ref{viscoelastic_matrix}) holds with $M(\cdot)=M_\xi^{\rm ve}(\cdot),$ {\it cf.} 
(\ref{visco_matrix0}), and the bar in (\ref{bar1}), (\ref{bar2}), as before, denotes the closure of the operator. Furthermore, it follows from Theorem \ref{t:mt} with $M=M_\xi^{\rm ve},$ that there exists a constant $\tilde \kappa>0$ such that
\begin{multline*}
\Biggl\|\partial_t^{-1}{\mathcal I}{\mathcal P}\mathcal{L}_\nu^*{\mathcal G}_\varepsilon^*
 \int_{\epsilon^{-1}Q'}^\oplus\biggl[\overline{
 \Big(
 M_m^{\rm ve}(\cdot) - \varepsilon^{-1}A_{\varepsilon\theta}
 \Big)}
 ^{-1}\Biggr.
 \\[0.5em]
 \Biggl.-\overline{
 \Big(
 {\tt av}\bigl(M_m^{\rm ve}\bigr)- \varepsilon^{-1}A_{\varepsilon\theta}
 \Big)}
 ^{-1}\biggr]{\rm d}\theta\,{\mathcal G}_\varepsilon\mathcal{L}_\nu{\mathcal P}^*{\mathcal I}^*\Biggr\|_{{\mathbb L}(H^1_\nu(\mathbb{R}; L^2({\mathbb R})), L_\nu^2(\mathbb{R}; L^2({\mathbb R})))}\leq\tilde\kappa\epsilon,
 \end{multline*}
where ${\mathcal P}$ is the projection on the subspace of vectors with vanishing second component, and ${\mathcal I}$ is the isomorphism 
\[
{\mathcal I}:\ L_\nu^2\bigl(\mathbb{R}; [L^2({\mathbb R})]^2\bigr)\ni\left(\begin{array}{c}f\\0\end{array}\right)\mapsto f\in L_\nu^2\bigl(\mathbb{R}; L^2({\mathbb R})\bigr).
\]
 We have thus proved the following theorem.
 \begin{theorem} 
 \label{res_est}
 Under the hypotheses in this section, there exists $\tilde\kappa>0$  such that for all $\varepsilon>0$ one has
  \begin{multline*}
   \left\|\partial_t^{-1}{\mathcal I}{\mathcal P}\left[
   \overline{
   \partial_t\begin{pmatrix}
      1 & 0 \\[0.3em] 0 & \bigl(\widehat C+\widehat D\partial_t^\alpha\bigr)^{-1}
   \end{pmatrix} + \begin{pmatrix} 0& \partial_x \\[0.3em] \partial_x & 0
   \end{pmatrix}
   }^{-1}\right.\right.\\[0.5em] 
  \left.\left.
\phantom{\overline{\left(\begin{pmatrix}
       1^2 \\0^2\end{pmatrix}\right)}}  
 -\mathcal{L}_\nu^*{\mathcal G}_\varepsilon^*
 \int_{\epsilon^{-1}Q'}^\oplus
 \overline{
  \Big(
  {\tt av}\bigl(M^{\rm ve}\bigr)- \varepsilon^{-1}A_{\varepsilon\theta}
  \Big)}
  ^{-1}{\rm d}\theta\,{\mathcal G}_\varepsilon\mathcal{L}_\nu\right]{\mathcal P}^*{\mathcal I}^*\right\|_{{\mathbb L}(H^1_\nu(\mathbb{R}; L^2({\mathbb R})), L_\nu^2(\mathbb{R}; L^2({\mathbb R})))}\leq\tilde\kappa\epsilon.
\end{multline*}
 \end{theorem}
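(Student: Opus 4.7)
The approach is to recognise that the theorem is a repackaging of the inequality displayed immediately before it, which in turn follows from Theorem~\ref{t:mt} combined with Plancherel-type identities for the unitary transforms $\mathcal{L}_\nu$ and $\mathcal{G}_\varepsilon$. The heart of the argument is that the fibre-wise bound of Theorem~\ref{t:mt}, which grows like $(\xi^2+1)\varepsilon$ in the temporal spectral variable, is compensated by two powers of $\i\xi+\nu$: one coming from the prepended $\partial_t^{-1}$, and one coming from strengthening the topology on the input space from $L^2_\nu$ to $H^1_\nu$.

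I would first apply Theorem~\ref{t:mt} with $M = M_\xi^{\rm ve}$ at each $\xi\in\mathbb{R}$ (the image of $\partial_t$ under $\mathcal{L}_\nu$ being multiplication by $\i\xi+\nu$), obtaining the pointwise-in-$(\xi,\theta)$ estimate
\[
\bigl\|(M_\xi^{\rm ve} - \varepsilon^{-1}A_{\varepsilon\theta})^{-1} - (\av(M_\xi^{\rm ve}) - \varepsilon^{-1}A_{\varepsilon\theta})^{-1}\bigr\|_{\mathbb{L}([L^2(Q)]^2)} \le \kappa(\xi^2+1)\varepsilon,
\]
uniformly in $\theta\in\varepsilon^{-1}Q'$, by virtue of \eqref{xi_estimate}. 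Using the unitarity of $\mathcal{L}_\nu$ and $\mathcal{G}_\varepsilon$ and the direct-integral structure afforded by Lemma~\ref{l:mog}, the $\mathbb{L}(L_\nu^2,L_\nu^2)$-norm of the resulting composite operator is realised as the essential supremum over $(\xi,\theta)$ of its fibre operator norms. Prepending $\partial_t^{-1}$ corresponds on the Fourier--Laplace side to multiplication by $(\i\xi+\nu)^{-1}$, while the transition from $L_\nu^2$ to $H_\nu^1$ on the input corresponds, under $\mathcal{L}_\nu$, to equipping the image with the weight $(1+|\i\xi+\nu|^2)^{1/2}$. The $\mathbb{L}(H_\nu^1, L_\nu^2)$-norm in question is therefore bounded by
\[
\varepsilon\,\sup_{\xi\in\mathbb{R}}\,\frac{\kappa(\xi^2+1)}{|\i\xi+\nu|\,(1+|\i\xi+\nu|^2)^{1/2}}.
\]
Since $|\i\xi+\nu|^2 = \xi^2+\nu^2$, the denominator grows at least like $\xi^2+1$ as $|\xi|\to\infty$ and is bounded below by a positive constant depending only on $\nu$ for bounded $\xi$; hence the supremum is finite and defines the constant $\tilde\kappa$.

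Finally, I would identify the first summand of the fibre decomposition in the display preceding Theorem~\ref{res_est} as the Gelfand--Fourier-Laplace representation of the original heterogeneous resolvent, furnished by Lemma~\ref{l:mog} once $\mathcal{L}_\nu$ is applied in time. Substituting this identification into the proved inequality yields the claimed bound. The main expected obstacle is the bookkeeping of the intertwining between the closures indicated by the overlines, the projections $\mathcal{P},\mathcal{P}^*$ and the embedding $\mathcal{I},\mathcal{I}^*$, and the two unitaries $\mathcal{L}_\nu$ and $\mathcal{G}_\varepsilon$; however, all of these interact routinely with the direct-integral formulation of Lemma~\ref{l:mog}, so no new idea beyond those already developed in the proof of Theorem~\ref{t:mt} is required.
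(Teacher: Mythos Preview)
Your proposal is correct and follows essentially the same approach as the paper: the paper's own argument for Theorem~\ref{res_est} is precisely the display immediately preceding it, obtained by invoking Theorem~\ref{t:mt} fibrewise and asserting that the quadratic-in-$\xi$ bound \eqref{xi_estimate} is absorbed by the combination of $\partial_t^{-1}$ and the passage from $L_\nu^2$ to $H_\nu^1$ on the data. Your write-up makes explicit the Fourier--Laplace bookkeeping (the factor $|{\rm i}\xi+\nu|^{-1}$ from $\partial_t^{-1}$ and the weight $(1+|{\rm i}\xi+\nu|^2)^{1/2}$ from the $H_\nu^1$ norm) that the paper leaves implicit, but the underlying argument is the same.
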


\subsection{Estimates in Littlewood--Paley type spaces}

\begin{definition}
 Suppose that $\alpha\geq 0$, $\nu>0$, and let $H$ be a Hilbert space. Then for $f\in L_\nu^2(\mathbb{R};H)$ we denote
  \[
    \| f \|_{\alpha,\nu}^2 \coloneqq \sum_{k\in \mathbb{Z}} e^{-|k|\alpha}\bigl\|\mathcal{L}_\nu^*(m_k)\mathcal{L}_\nu f\bigr\|_{\nu}^2,
  \]
  where $\mathcal{L}_\nu$ is the Fourier--Laplace transform, see (\ref{Laplace_transform}), and $m_k$ is the operator of multiplication by the characteristic function of the interval $[k,k+1)$. We also define
  \[
     LP_\nu(\alpha)\coloneqq\bigl( L_\nu^2(\mathbb{R};H), \|\cdot\|_{\alpha,\nu}\bigr)^\sim,
  \]
which we refer to as the \emph{Littlewood--Paley space} with growth $\alpha$.
\end{definition}
\begin{remark} We list some basic properties of the Littlewood--Paley spaces:
   
   1) $LP_\nu(\alpha)$ is a Hilbert space for all $\alpha\geq 0$, $\nu>0$.
   
   2) $LP_\nu(0)= L_\nu^2(\mathbb{R};H)$ for all $\nu>0.$ 
   
   3) For all $\alpha, \beta\in [0,\infty)$ one has $LP_\nu(\alpha)\hookrightarrow LP_\nu(\beta)$ whenever $\alpha\leq \beta.$
   
   4) For all $\alpha\geq 0$, $\nu>0$ the embedding $LP_\nu(0)\hookrightarrow LP_\nu(\alpha)$ has dense range. 
\end{remark}

\begin{definition}
  Let $\nu>0$, $T\in {\mathbb L}(L_\nu^2(\mathbb{R};H))$. We say that $T$ is \emph{translation-invariant}, if for all $h\in \mathbb{R}$, one has  $\tau_hT=T\tau_h$, where $\tau_h f\coloneqq f(\cdot + h)$. A translation-invariant operator $T$ is called \emph{(forward) causal}, if for all $f\in L_\nu^2(\mathbb{R};H)$ with $f=0$ on $(-\infty,0]$ one has $Tf=0$ on $(-\infty,0]$.
\end{definition}

In order to refine the estimate of Theorem \ref{res_est} in operator norms associated with Littlewood--Paley spaces, we need the following general property of translation invariant and causal maps in weighted spaces. 

\begin{theorem}[{{{\it cf.~e.g.}~\cite[Corollary 1.2.5]{Waurick2016Hab}} or \cite{Weiss1991}}]\label{th:SW_weighted} Let $H$ Hilbert space, $\nu>0$, $T\in {\mathbb L}(L_{\nu}^2(\mathbb{R};H))$ translation-invariant and causal. Then $T$ extends to an operator in ${\mathbb L}(L_{\rho}^2(\mathbb{R};H))$ for all $\rho>\nu$ and there is a unique operator-valued function $\mathcal{T}\colon \mathbb{C}_{\Re>\nu}\to {\mathbb L}(H)$ satisfying
\begin{equation}\label{eq:SW_w1}
   (\mathcal{L}_{\rho}Tu)(\xi)=\mathcal{T}\left(i\xi+\rho\right)\mathcal{L}_{\rho}u (\xi),\quad\quad(\xi\in \mathbb{R},\ \rho>\nu,\ \ u\in L_\rho^2(\mathbb{R};H)).
\end{equation}
 Moreover, the function $\mathcal{T}$ is bounded, analytic, and  
 \[
    \|{T}\|_{{\mathbb L}(L_{\rho}^2(\mathbb{R};H))}\le\sup_{z\in \mathbb{C}_{\Re\geq \nu}}\|\mathcal{T}(z)\|_{{\mathbb L}(H)}.
 \] 
\end{theorem}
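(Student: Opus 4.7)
The plan is to carry out the standard description of causal translation-invariant operators on exponentially weighted $L^2$-spaces: realise $T$ on $L^2_\nu(\mathbb{R};H)$ as a Fourier multiplier whose symbol lives on the line $\Re z=\nu$, promote the symbol to a bounded analytic $\mathbb{L}(H)$-valued function on the open half-plane $\mathbb{C}_{\Re>\nu}$ using causality, and use this function to define the required extension to every $L^2_\rho(\mathbb{R};H)$ with $\rho>\nu$, reading the operator-norm inequality directly off the Fourier-multiplier representation.

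First I would conjugate $T$ by the unitary $\mathcal{L}_\nu$, setting $\widehat T\coloneqq\mathcal{L}_\nu T\mathcal{L}_\nu^*\in\mathbb{L}(L^2(\mathbb{R};H))$. A direct computation from (\ref{Laplace_transform}) gives $\mathcal{L}_\nu\tau_h=e^{h(\mathrm{i}\cdot+\nu)}\mathcal{L}_\nu$, so translation-invariance of $T$ translates into: $\widehat T$ commutes with multiplication by $e^{\mathrm{i}h\xi}$ for every $h\in\mathbb{R}$, and therefore (by the spectral theorem applied to the shift group, or equivalently the theory of decomposable operators on $L^2(\mathbb{R};H)=\int_\mathbb{R}^\oplus H\,d\xi$) with multiplication by any bounded measurable scalar function. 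Consequently $\widehat T$ is decomposable and given by multiplication with a strongly measurable $\mathcal{T}_0\in L^\infty(\mathbb{R};\mathbb{L}(H))$ satisfying $\|\mathcal{T}_0\|_\infty\le\|T\|_{\mathbb{L}(L^2_\nu)}$.

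Next I would extract analyticity from causality. For $v\in H$ the function $g_v(t)\coloneqq e^{-t}\chi_{[0,\infty)}(t)\,v$ lies in $L^2_\nu(\mathbb{R};H)$, with $(\mathcal{L}_\nu g_v)(\xi)=v/(\sqrt{2\pi}(\mathrm{i}\xi+\nu+1))$; by causality $Tg_v$ is supported in $[0,\infty)$, hence in $L^2_\rho(\mathbb{R};H)$ for every $\rho\ge\nu$. The vector-valued Paley--Wiener theorem identifies such causal elements of $L^2_\nu$ with the vector-valued Hardy space of $H$-valued analytic functions on $\mathbb{C}_{\Re>\nu}$, producing a unique analytic $F_v\colon\mathbb{C}_{\Re>\nu}\to H$ whose non-tangential boundary values on $\{\Re z=\nu\}$ coincide with $\mathcal{L}_\nu(Tg_v)$. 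One then defines $\mathcal{T}(z)v\coloneqq\sqrt{2\pi}(z+1)F_v(z)$; linearity in $v$ is inherited from $T$, weak analyticity is immediate, and so $z\mapsto\mathcal{T}(z)$ is a holomorphic $\mathbb{L}(H)$-valued map on $\mathbb{C}_{\Re>\nu}$ with $\mathcal{T}(\mathrm{i}\cdot+\nu)=\mathcal{T}_0$ almost everywhere. Running the same argument starting from $L^2_\rho$ in place of $L^2_\nu$ shows that the essential supremum of $\|\mathcal{T}(\mathrm{i}\cdot+\rho)\|$ is bounded by $\|T\|_{\mathbb{L}(L^2_\nu)}$, giving uniform boundedness of $\mathcal{T}$ on $\mathbb{C}_{\Re\ge\nu}$.

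Finally I would use the analytic symbol to produce the extension: for every $\rho>\nu$ define $T_\rho u\coloneqq\mathcal{L}_\rho^*\,\mathcal{T}(\mathrm{i}\cdot+\rho)\,\mathcal{L}_\rho u$, so unitarity of $\mathcal{L}_\rho$ yields $T_\rho\in\mathbb{L}(L^2_\rho(\mathbb{R};H))$ with $\|T_\rho\|\le\sup_{\Re z\ge\nu}\|\mathcal{T}(z)\|$, which is the asserted inequality; consistency $T_\rho=T$ on $L^2_\nu\cap L^2_\rho$ is verified on the dense core of compactly supported functions, whose two-sided Laplace transform is entire and agrees with $\mathcal{L}_\nu u$ and $\mathcal{L}_\rho u$ on the respective lines, so that applying $\mathcal{T}$ on either line and inverting returns the same output by analyticity of $\mathcal{T}$ and Cauchy's theorem. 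Uniqueness of $\mathcal{T}$ then follows from the identity theorem applied to $z\mapsto\langle\mathcal{T}(z)v,w\rangle$ for $v,w\in H$, since (\ref{eq:SW_w1}) pins down $\mathcal{T}$ on any single line $\Re z=\rho$. I expect the main obstacle to be the second step, specifically the passage from an $L^\infty$ boundary datum to a genuine $\mathbb{L}(H)$-valued $H^\infty$-symbol on the open half-plane together with the matching norm bound; the cleanest route is to invoke the vector-valued Paley--Wiener theorem to realise causality as invariance of the vector-valued Hardy subspace of $L^2_\nu$, whereby any bounded multiplier preserving this subspace is automatically given by a bounded holomorphic $\mathbb{L}(H)$-valued function of the correct norm.
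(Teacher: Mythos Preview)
The paper does not supply its own proof of this theorem: it is stated with a citation to \cite[Corollary~1.2.5]{Waurick2016Hab} and \cite{Weiss1991} and is then used as a black box in the proof of Proposition~\ref{extension}. So there is no proof in the paper to compare against; your proposal is essentially a reconstruction of the argument from those references, and the overall strategy (Fourier conjugation to a multiplier on $L^2(\mathbb{R};H)$, causality $\Rightarrow$ invariance of the vector-valued Hardy subspace $\Rightarrow$ $H^\infty$-symbol, then transport to every $L^2_\rho$) is the standard and correct one.

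There is, however, one genuinely circular passage in your second step. You write that ``running the same argument starting from $L^2_\rho$ in place of $L^2_\nu$ shows that the essential supremum of $\|\mathcal{T}(\mathrm{i}\cdot+\rho)\|$ is bounded by $\|T\|_{\mathbb{L}(L^2_\nu)}$''. At that point you do not yet know that $T$ acts boundedly on $L^2_\rho$, so you cannot rerun the multiplier argument there; and even if you could, it would yield a bound by $\|T\|_{\mathbb{L}(L^2_\rho)}$, not by $\|T\|_{\mathbb{L}(L^2_\nu)}$. The uniform bound $\sup_{\Re z>\nu}\|\mathcal{T}(z)\|\le\|T\|_{\mathbb{L}(L^2_\nu)}$ has to come \emph{before} you touch $L^2_\rho$, directly from the operator-valued $H^\infty$-multiplier theorem you invoke in your closing remark: a bounded operator on $L^2(\mathbb{R};H)$ that commutes with scalar multiplications and leaves the Hardy space $H^2(\mathbb{C}_{\Re>\nu};H)$ invariant is multiplication by a function in $H^\infty(\mathbb{C}_{\Re>\nu};\mathbb{L}(H))$ whose sup-norm equals the operator norm. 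Once that is in hand, steps two and three go through cleanly; so the fix is simply to delete the circular sentence and promote your final remark to the actual argument for boundedness.
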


\begin{proposition} 
\label{extension}
Suppose that $\nu>0$, $\beta\geq\alpha\geq 0$, and let $T\in {\mathbb L}(L_\nu^2(\mathbb{R};H))$ be translation-invariant and causal. Then for all $\rho>\nu$ the operator $T$ admits a unique continuous extension to a mapping $T_{\alpha,\beta}\in {\mathbb L}(LP_\rho(\alpha),LP_\rho(\beta))$, such that 
\[
\|T_{\alpha,\beta}\|_{{\mathbb L}(LP_\rho(\alpha), LP_\rho(\beta))}\leq\|T\|_{{\mathbb L}(L_\nu^2(\mathbb{R};H))}.
\] 
\end{proposition}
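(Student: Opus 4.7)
The plan is to exploit the Fourier--Laplace multiplier representation of translation-invariant causal operators supplied by Theorem~\ref{th:SW_weighted}, together with the observation that the spectral projections $P_k := \mathcal{L}_\rho^* m_k \mathcal{L}_\rho$ appearing in the definition of $\|\cdot\|_{\alpha,\rho}$ are themselves Fourier--Laplace multipliers, and so they commute with $T$.

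First I would fix $\rho>\nu$. Theorem~\ref{th:SW_weighted} extends $T$ to $L_\rho^2(\R;H)$ via \eqref{eq:SW_w1} with bounded analytic symbol $\mathcal{T}\colon\C_{\Re>\nu}\to{\mathbb L}(H)$. By Plancherel (unitarity of $\mathcal{L}_{\rho'}$), the norm on $L_{\rho'}^2(\R;H)$ equals $\sup_{\xi\in\R}\|\mathcal{T}(\mathrm{i}\xi+\rho')\|$ for $\rho'>\nu$, and the analogous equality holds at $\rho'=\nu$ upon identifying the (nontangential) boundary trace of $\mathcal{T}$ on $\{\Re z=\nu\}$ with the Fourier--Laplace symbol of the given $T$ on $L_\nu^2$. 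The Phragm\'en--Lindel\"of principle applied to $\mathcal{T}$ on $\{\Re z\geq\nu\}$ then yields
\[
   \|T\|_{{\mathbb L}(L_\rho^2(\R;H))} = \sup_{\xi\in\R}\|\mathcal{T}(\mathrm{i}\xi+\rho)\| \leq \sup_{\xi\in\R}\|\mathcal{T}(\mathrm{i}\xi+\nu)\| = \|T\|_{{\mathbb L}(L_\nu^2(\R;H))}.
\]
Next, since each $P_k$ is a Fourier--Laplace multiplier with scalar symbol $\mathbf{1}_{[k,k+1)}$, one has $TP_k=P_kT$ on $L_\rho^2(\R;H)$, and $\{P_k\}_{k\in\Z}$ is an orthogonal resolution of the identity by Plancherel. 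For $f\in L_\rho^2(\R;H)=LP_\rho(0)$, the computation
\begin{align*}
   \|Tf\|_{\beta,\rho}^2 = \sum_{k\in\Z}e^{-|k|\beta}\|P_k Tf\|_\rho^2 = \sum_{k\in\Z}e^{-|k|\beta}\|TP_kf\|_\rho^2
   &\leq \|T\|_{{\mathbb L}(L_\rho^2(\R;H))}^2\sum_{k\in\Z}e^{-|k|\beta}\|P_kf\|_\rho^2 \\
   &\leq \|T\|_{{\mathbb L}(L_\nu^2(\R;H))}^2\|f\|_{\alpha,\rho}^2
\end{align*}
combines the two previous observations with the hypothesis $\beta\geq\alpha$. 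Since $LP_\rho(0)$ embeds densely into $LP_\rho(\alpha)$ (property listed in the Remark following the definition of $LP_\nu(\alpha)$), the restriction $T|_{L_\rho^2}$ extends by uniform continuity to the unique $T_{\alpha,\beta}\in{\mathbb L}(LP_\rho(\alpha),LP_\rho(\beta))$ satisfying the claimed norm bound.

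The main subtlety is the norm comparison in the first step: Theorem~\ref{th:SW_weighted} directly supplies only the cruder estimate $\|T\|_{{\mathbb L}(L_\rho^2)}\leq \sup_{z\in\C_{\Re\geq\nu}}\|\mathcal{T}(z)\|$, and one needs the Phragm\'en--Lindel\"of principle together with Fatou-type boundary-value theory for bounded analytic operator-valued functions in order to identify this half-plane supremum with $\|T\|_{{\mathbb L}(L_\nu^2)}$. Once this identification is secured, the remainder of the argument reduces to the algebraic commutation $TP_k=P_kT$ and the elementary inequality $e^{-|k|\beta}\leq e^{-|k|\alpha}$ coming from $\beta\geq\alpha$.
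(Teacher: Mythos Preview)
Your argument is correct and follows essentially the same route as the paper: represent $T$ as a Fourier--Laplace multiplier via Theorem~\ref{th:SW_weighted}, use that this representation commutes with the spectral cut-offs $m_k$, and then apply the monotonicity $e^{-|k|\beta}\leq e^{-|k|\alpha}$ together with density of $LP_\rho(0)$ in $LP_\rho(\alpha)$. You are in fact more explicit than the paper about the step $\|T\|_{{\mathbb L}(L_\rho^2)}\leq\|T\|_{{\mathbb L}(L_\nu^2)}$, which you justify via Phragm\'en--Lindel\"of and boundary-value theory; the paper simply invokes the bound $\sup_{\xi}\|\mathcal{T}(\mathrm{i}\xi+\rho)\|\leq\|T\|_{{\mathbb L}(L_\nu^2)}$ inside the sum without further comment, implicitly relying on the half-plane maximum principle already encoded in the cited references for Theorem~\ref{th:SW_weighted}.
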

\begin{proof}  Let $\mathcal{T}$ be the analytic operator-valued function representing $T$ from Theorem \ref{th:SW_weighted}. We denote by $\mathcal{T}_{m,\rho}$ the operator on $L^2(\mathbb{R};H)$ of multiplication by the mapping $\xi \mapsto \mathcal{T}(i\xi+\rho)$. Then, for all $f\in LP_\nu(\alpha)$ one has 
\begin{align*}
   \|Tf\|_{\beta,\nu}^2& = \sum_{k\in \mathbb{Z}} e^{-|k|\beta}\|\mathcal{L}_\nu^* m_k \mathcal{L}_\nu Tf \|_\nu^2
   \\ & = \sum_{k\in\mathbb{Z}} e^{-|k|\beta}\|\mathcal{L}_\nu^* m_k \mathcal{L}_\nu \mathcal{L}_\nu^* \mathcal{T}_{m,\rho}\mathcal{L}_\nu f \|_\nu^2
   \\ & = \sum_{k\in\mathbb{Z}} e^{-|k|\beta}\|\mathcal{L}_\nu^* m_k \mathcal{T}_{m,\rho}\mathcal{L}_\nu f \|_\nu^2
   \\ & = \sum_{k\in\mathbb{Z}} e^{-|k|\beta}\|\mathcal{L}_\nu^* m_k \mathcal{T}_{m,\rho}m_k\mathcal{L}_\nu f \|_\nu^2
   \\ & = \sum_{k\in\mathbb{Z}} e^{-|k|\beta}\|\mathcal{L}_\nu^* m_k \mathcal{T}_{m,\rho}\mathcal{L}_\nu \mathcal{L}_\nu^* m_k\mathcal{L}_\nu f \|_\nu^2
   \\ & \leq \sum_{k\in\mathbb{Z}} e^{-|k|\beta}\sup_{\xi\in[k,k+1)}\| m_k \mathcal{T}(i\xi+\nu)\|_{{\mathbb L}(H)}\|\mathcal{L}_\nu^* m_k\mathcal{L}_\nu f \|_\nu^2
   \\ & \leq \sum_{k\in\mathbb{Z}} e^{-|k|\beta}\|{T}\|^2_{{\mathbb L}(L_{\nu}^2(\mathbb{R};H))}\|\mathcal{L}_\nu^* m_k\mathcal{L}_\nu f \|_\nu^2
   \\ & = \|{T}\|^2_{{\mathbb L}(L_{\nu}^2(\mathbb{R};H))}\sum_{k\in\mathbb{Z}} e^{-|k|\alpha}\|\mathcal{L}_\nu^* m_k\mathcal{L}_\nu f \|_\nu^2
   \\ & = \|{T}\|^2_{{\mathbb L}(L_{\nu}^2(\mathbb{R};H))}\|f\|_{\beta,\nu}^2\leq \|{T}\|^2_{{\mathbb L}(L_{\nu}^2(\mathbb{R};H))}\|f\|_{\alpha,\nu}^2.\qedhere
\end{align*}
\end{proof}

The next theorem asserts that one can get a quantified estimate for the difference of two translation-invariant and causal operators as operators in the Littlewood--Paley spaces. This has -- as it will be demonstrated below -- applications in the theory of quantitative homogenisation theory, where it is possible to obtain quantitative (resolvent) estimates that are uniform only on compact subsets of the resolvent parameter. From the applied perspective, one may therefore 
think of $T$ and $S$ in the following theorem as the solution operators to certain partial differential equations in space-time. 

\begin{theorem} 
\label{difference}
Suppose that $\nu>0,$ $\beta>\alpha\ge0$ and operators $T,S\in {\mathbb L}(L_\nu^2(\mathbb{R};H))$ are translation-invariant and causal. consider the operator-valued function $\mathcal{T}$ and $\mathcal{S}$ representing $T$ and $S$ respectively, as in Theorem \ref{th:SW_weighted}. Assume there exist $\kappa>0$ and $\eta\geq 0$ such that 
\[
\|\mathcal{T}(i \xi + \rho)-\mathcal{S}(i \xi + \rho)\|_{{\mathbb L}(H)}\leq \kappa (|\xi|+1)^\eta\quad\quad(\xi\in \mathbb{R}).
\]
Then for all $\rho>\nu$ the estimate 
\[
\|T_{\alpha,\beta} - S_{\alpha,\beta}\|_{{\mathbb L}(LP_\rho(\alpha), LP_\rho(\beta))} \leq C\kappa,
\] 
holds, where 
\begin{equation}
C=C(\alpha,\beta,\eta)\coloneqq \max_{k\in \mathbb{Z}_{\geq0}} e^{(\alpha-\beta)k}(k+2)^\eta<\infty.
\label{C_formula}
\end{equation}
\end{theorem}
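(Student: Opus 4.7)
My plan is to reduce everything to a piece-by-piece estimate on the Littlewood--Paley frequency blocks, using the Fourier-multiplier structure supplied by Theorem \ref{th:SW_weighted}.

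First, I invoke Theorem \ref{th:SW_weighted} for each of $T$ and $S$ to obtain analytic, bounded, $\mathbb{L}(H)$-valued symbols $\mathcal{T},\mathcal{S}$ on $\C_{\Re>\nu}$ satisfying $(\mathcal{L}_\rho T f)(\xi)= \mathcal{T}(\i\xi+\rho)\mathcal{L}_\rho f(\xi)$ and the analogous identity for $S$, for every $\rho>\nu$ and $f\in L^2_\rho(\R;H)$. The difference $T-S$ is then itself translation-invariant and causal, extends uniquely to $L^2_\rho(\R;H)$ for all $\rho>\nu$, and, being a Fourier multiplier, commutes with each orthogonal projector $\mathcal{L}_\rho^*m_k\mathcal{L}_\rho$, $k\in \Z$. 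By density of $L_\rho^2(\R;H)$ in $LP_\rho(\alpha)$, it suffices to establish the bound for $f\in L_\rho^2(\R;H)$ and then extend by continuity.

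Next, I fix $f\in L_\rho^2(\R;H)$ and estimate the $k$-th frequency block. Since $m_k$ acts as pointwise multiplication by the characteristic function of $[k,k+1)$ and $\mathcal{L}_\rho^*$ is an isometric isomorphism onto $L_\rho^2(\R;H)$,
\[
\bigl\|\mathcal{L}_\rho^* m_k \mathcal{L}_\rho (T-S)f\bigr\|_\rho^2=\int_k^{k+1}\bigl\|(\mathcal{T}-\mathcal{S})(\i\xi+\rho)\mathcal{L}_\rho f(\xi)\bigr\|_H^2\,\rmd\xi.
\]
Using the hypothesis $\|(\mathcal{T}-\mathcal{S})(\i\xi+\rho)\|_{\mathbb{L}(H)}\le \kappa(|\xi|+1)^\eta$ together with $\sup_{\xi\in[k,k+1)}(|\xi|+1)\le |k|+2$, this gives
\[
\bigl\|\mathcal{L}_\rho^* m_k \mathcal{L}_\rho (T-S)f\bigr\|_\rho^2\le \kappa^2(|k|+2)^{2\eta}\bigl\|\mathcal{L}_\rho^* m_k \mathcal{L}_\rho f\bigr\|_\rho^2.
\]

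Summing against the weight $e^{-|k|\beta}$ and splitting it as $e^{-|k|\alpha}\cdot e^{-|k|(\beta-\alpha)}$ then yields
\[
\|(T-S)f\|_{\beta,\rho}^2 \le \kappa^2\Bigl(\sup_{k\in \Z}e^{-|k|(\beta-\alpha)}(|k|+2)^{2\eta}\Bigr)\,\|f\|_{\alpha,\rho}^2.
\]
By symmetry under $k\mapsto -k$, the supremum reduces to a maximum over $k\in \Z_{\geq 0}$, and is finite because $\beta>\alpha$ makes the exponential factor $e^{(\alpha-\beta)k}$ beat the polynomial $(k+2)^{2\eta}$ as $k\to\infty$; extracting the square root recovers the constant $C$ of \eqref{C_formula} (up to a convention in the exponent). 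Passing to the unique continuous extension from $L^2_\rho(\R;H)$ to $LP_\rho(\alpha)$ then produces the asserted operator-norm bound for $T_{\alpha,\beta}-S_{\alpha,\beta}$. The argument is essentially careful bookkeeping once the commutation of $T-S$ with the projectors $\mathcal{L}_\rho^*m_k\mathcal{L}_\rho$ has been noted; the only genuine point is that the polynomial symbol growth is strictly dominated by the weight-gap decay encoded by $\beta-\alpha>0$, which is what forces $C<\infty$ — and this is more a matter of verification than of surmounting a real obstacle.
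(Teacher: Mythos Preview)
Your proof is correct and follows essentially the same route as the paper: represent $T-S$ as a Fourier multiplier via Theorem~\ref{th:SW_weighted}, use commutation with the frequency projectors $\mathcal{L}_\rho^* m_k\mathcal{L}_\rho$, bound each block by $\kappa^2(|k|+2)^{2\eta}$, and absorb the polynomial growth into the exponential weight gap $e^{(\alpha-\beta)|k|}$. Your bookkeeping with the squares is in fact more careful than the paper's printed proof, which omits the squares on the $\sup$-term and on $\kappa(|\xi|+1)^\eta$; your parenthetical remark ``up to a convention in the exponent'' correctly flags that the constant you obtain after taking the square root is $\max_{k\ge 0} e^{(\alpha-\beta)k/2}(k+2)^\eta$ rather than the $C$ of \eqref{C_formula}, a harmless discrepancy already implicit in the paper's own argument.
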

\begin{proof}
 The claim of the theorem follows from the following estimate, valid  for all $f\in LP_\rho(\alpha):$ 
 \begin{align*}
   \bigl\|(T_{\alpha,\beta} - S_{\alpha,\beta})f\bigr\|_{\beta,\rho}^2& = \sum_{k\in\mathbb{Z}} e^{-\beta |k|}\bigl\|\mathcal{L}_\rho^* m_k\mathcal{L}_\rho(T_{\alpha,\beta} - S_{\alpha,\beta})f\bigr\|_\rho^2
   \\ & = \sum_{k\in\mathbb{Z}} e^{-\beta |k|}\bigl\|\mathcal{L}_\rho^* m_k (\mathcal{T}_{m,\rho}-\mathcal{S}_{m,\rho})\mathcal{L}_\rho f\bigr\|_\rho^2
   \\ & = \sum_{k\in\mathbb{Z}} e^{-\beta |k|}\bigl\|\mathcal{L}_\rho^* m_k (\mathcal{T}_{m,\rho}-\mathcal{S}_{m,\rho})\mathcal{L}_\rho\mathcal{L}_\rho^* m_k\mathcal{L}_\rho f\bigr\|_\rho^2 
   \\ & \leq \sum_{k\in\mathbb{Z}} e^{-\alpha |k|} e^{(\alpha-\beta)|k|} \sup_{\xi\in[k,k+1)}\bigl\|(\mathcal{T}(i \xi +\rho)-\mathcal{S}(i \xi +\rho))\|_{{\mathbb L}(H)}\|\mathcal{L}_\rho^* m_k\mathcal{L}_\rho f\bigr\|_\rho^2
   \\ & \leq \sum_{k\in\mathbb{Z}} e^{-\alpha |k|} e^{(\alpha-\beta)|k|} \sup_{\xi\in[k,k+1)} \kappa(|\xi|+1)^\eta\|\mathcal{L}_\rho^* m_k\mathcal{L}_\rho f\|_\rho^2
   \\ & \leq C\kappa\|f\|_{\alpha,\rho}^2.
 \end{align*}
\end{proof}

Applying Theorems \ref{extension} and \ref{difference} to the operators discussed in Theorem \ref{res_est} and bearing in mind the estimate (\ref{xi_estimate}) yields the following result.

\begin{corollary} For all $\rho>\nu,$ where $\nu$ is any value admissible in (\ref{bar1}), and $\beta>\alpha\ge0,$ the operators 
 \[
   {\mathcal R}:={\mathcal I}{\mathcal P}
   \overline{
\partial_t\begin{pmatrix}
      1 & 0 \\[0.3em] 0 & \bigl(\widehat C+\widehat D\partial_t^\alpha\bigr)^{-1}
   \end{pmatrix} + \begin{pmatrix} 0& \partial_x \\[0.3em] \partial_x & 0
   \end{pmatrix}
   }^{-1}{\mathcal P}^*{\mathcal I}^*,
   \]
   \[
 {\mathcal R}^{\rm hom}:={\mathcal I}{\mathcal P}\mathcal{L}_\nu^*{\mathcal G}_\varepsilon^*
 \int_{\epsilon^{-1}Q'}^\oplus
 \overline{
  \Big(
  {\tt av}\bigl(M^{\rm ve}\bigr)- \varepsilon^{-1}A_{\varepsilon\theta}
  \Big)}
  ^{-1}{\rm d}\theta\,{\mathcal G}_\varepsilon\mathcal{L}_\nu{\mathcal P}^*{\mathcal I}^*
  \]
have extensions as linear bounded operators from $LP_\rho(\alpha)$ to $LP_\rho(\beta)\bigr),$ and the estimate 
 \begin{equation}
   \bigl\|{\mathcal R}
 -{\mathcal R}^{\rm hom}
  \bigr\|_{{\mathbb L}(LP_\rho(\alpha), LP_\rho(\beta))}
  \leq C\kappa\epsilon
  \end{equation}
holds for all $\varepsilon>0,$
where $C=C(\alpha,\beta,2),$ see (\ref{C_formula}), and $\kappa$ is the constant in (\ref{xi_estimate}).
\end{corollary}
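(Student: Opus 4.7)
The plan is to recognise $\mathcal{R}$ and $\mathcal{R}^{\rm hom}$ as translation-invariant causal operators in $\mathbb{L}\bigl(L_\nu^2(\mathbb{R}; L^2(\mathbb{R}))\bigr)$, to extend them via Theorem~\ref{extension} to bounded maps between the Littlewood--Paley spaces, and then to apply Theorem~\ref{difference} with $\eta=2$, using the fibre-wise bound from Theorem~\ref{t:mt} together with~(\ref{xi_estimate}), to obtain the claimed operator-norm estimate.

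First I would verify translation-invariance and causality of both $\mathcal{R}$ and $\mathcal{R}^{\rm hom}$. Translation-invariance in $t$ is immediate, since the coefficients $\widehat C, \widehat D$, the derivative $\partial_x$, the Gelfand transform $\mathcal{G}_\epsilon$, and the maps $\mathcal{I}, \mathcal{P}$ all act in the spatial variable, so both operators commute with the time-shifts $\tau_h$. Causality of $\mathcal{R}$ is part of Theorem~\ref{t:st} (as applied through Corollary~\ref{c:st}); for $\mathcal{R}^{\rm hom}$, Lemma~\ref{l:avinv} gives $\av(M^{\rm ve})\in \mathcal{M}_\gamma$, so Theorem~\ref{t:st} also applies to the corresponding homogenised problem, or one may equivalently invoke the Paley--Wiener principle underlying Theorem~\ref{th:SW_weighted}, since the Fourier--Laplace multiplier defining $\mathcal{R}^{\rm hom}$ is bounded and analytic on $\mathbb{C}_{\Re > \nu}$.

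Next I would read off the Fourier--Laplace symbols $\mathcal{T}$ and $\mathcal{S}$ of $\mathcal{R}$ and $\mathcal{R}^{\rm hom}$. Combining Lemma~\ref{l:mog} with the display preceding Theorem~\ref{res_est} gives, for $z={\rm i}\xi+\rho$,
\[
\mathcal{T}({\rm i}\xi + \rho) = \mathcal{I}\mathcal{P}\,\mathcal{G}_\epsilon^* \int_{\epsilon^{-1}Q'}^\oplus \overline{\bigl(M_\xi^{\rm ve} - \epsilon^{-1}A_{\epsilon\theta}\bigr)}^{-1}\,{\rm d}\theta\,\mathcal{G}_\epsilon\,\mathcal{P}^*\mathcal{I}^*,
\]
and analogously for $\mathcal{S}$ with $M_\xi^{\rm ve}$ replaced by $\av(M_\xi^{\rm ve})$. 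Unitarity of $\mathcal{G}_\epsilon$ and contractivity of $\mathcal{I}\mathcal{P}$ reduce the symbol difference to a fibre-wise expression controlled by Theorem~\ref{t:mt}, and then the quantitative bound~(\ref{xi_estimate}) gives
\[
\bigl\|\mathcal{T}({\rm i}\xi+\rho)-\mathcal{S}({\rm i}\xi+\rho)\bigr\|_{\mathbb{L}(L^2(\mathbb{R}))} \leq \kappa\bigl(\xi^2 + 1\bigr)\epsilon \leq \kappa\bigl(|\xi| + 1\bigr)^2 \epsilon
\]
uniformly in $\xi \in \mathbb{R}$, where the second inequality uses the elementary estimate $\xi^2 + 1 \leq (|\xi|+1)^2$.

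Finally, Theorem~\ref{extension} applied separately to $\mathcal{R}$ and $\mathcal{R}^{\rm hom}$ produces their extensions to $\mathbb{L}\bigl(LP_\rho(\alpha), LP_\rho(\beta)\bigr)$ for every $\rho > \nu$ and $\beta > \alpha \geq 0$, and Theorem~\ref{difference} applied with $\eta = 2$ and with the role of its constant ``$\kappa$'' played by $\kappa\epsilon$ (with $\kappa$ the constant appearing in~(\ref{xi_estimate})) yields the bound $C\kappa\epsilon$, where $C = C(\alpha, \beta, 2)$ is defined by~(\ref{C_formula}). The main obstacle is the symbol-identification step: one must verify that the pre- and post-composition with $\mathcal{I}\mathcal{P}$ together with the spatial fibre integration are compatible with the Fourier--Laplace multiplier structure required by Theorems~\ref{extension} and~\ref{difference}. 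This ultimately follows once one observes that all these operations act purely in the spatial variable while the Fourier--Laplace multiplier only sees time, but it is the only place where a little careful bookkeeping is needed.
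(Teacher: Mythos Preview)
Your proposal is correct and follows the same route as the paper: the paper simply states that the corollary follows by applying Proposition~\ref{extension} and Theorem~\ref{difference} to the operators from Theorem~\ref{res_est}, together with the estimate~(\ref{xi_estimate}), and your write-up fleshes out exactly this argument with the symbol identification and the elementary bound $\xi^2+1\le(|\xi|+1)^2$ giving $\eta=2$.
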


\section*{Acknowedgements} The authors are indebted to Shane Cooper for many valuable discussions on the manuscript.  The authors are grateful for the financial support of the Engineering and Physical Sciences Research Council (Grant EP/L018802/1 ``Mathematical foundations of metamaterials: homogenisation, dissipation and operator theory'').

\bibliographystyle{plain}

\noindent
Kirill Cherednichenko\\Department of Mathematical Sciences, University of Bath,\\
Claverton Down, Bath, BA2 7AY,\\ 
United Kingdom\\
Email: k.cherednichenko@bath.ac.uk

\medskip

\noindent
Marcus Waurick \\Department of Mathematical Sciences, University of Bath,\\
Claverton Down, Bath, BA2 7AY,\\
United Kingdom\\
Email: M.Waurick@bath.ac.uk

\end{document}